\crefname{equation}{}{}
\newtheorem{theorem}{Theorem}[section]
\newcommand{\T}{\mathcal{T}}
\newcommand{\ball}{\mathbb{B}}
\newcommand{\real}{\mathbb{R}}
\newcommand{\nat}{\mathbb{N}}
\newcommand{\dic}{\mathcal{D}}
\newcommand{\RFMP}{\mathrm{RFMP}}
\newcommand{\N}{\mathcal{N}}
\newcommand{\SH}{\mathrm{SH}}
\newcommand{\APK}{\mathrm{APK}}
\newcommand{\APW}{\mathrm{APW}}
\newcommand{\Hs}[1]{\mathcal{H}_#1(\Omega)}
\newcommand{\Lp}[1]{\mathrm{L}^#1(\Omega)}
\newcommand{\lon}{\varphi}
\newcommand{\lat}{\theta}
\newcommand{\era}{\varepsilon^r}
\newcommand{\ephi}{\varepsilon^\lon}
\newcommand{\ete}{\varepsilon^t}
\newcommand{\trans}{\mathrm{T}}
\newcommand{\intd}{\mathrm{d}}
\newcommand{\pdervr}{\frac{\partial}{\partial r}}
\newcommand{\pdervlon}{\frac{\partial}{\partial \lon}}
\newcommand{\pdervt}{\frac{\partial}{\partial t}}
\title{High-dimensional experiments \\ for the downward continuation \\ using the LRFMP algorithm}
\author{N. Schneider$^\ast$, V. Michel\footnote{Geomathematics Group Siegen, University of Siegen, michel@mathematik.uni-siegen.de, naomi.schneider@mathematik.uni-siegen.de}\ \phantom{i} and N. Sneeuw\footnote{Institute of Geodesy, University of Stuttgart}}
\date{}
\begin{document}
\maketitle

\begin{abstract}
Time-dependent gravity data from satellite missions like GRACE-FO reveal mass redistribution in the system Earth at various time scales: long-term climate change signals, inter-annual phenomena like El Ni\~{n}o, seasonal mass transports and transients, e.\,g. due to earthquakes. For this contemporary issue, a classical inverse problem has to be considered: the gravitational potential has to be modelled on the Earth's surface from measurements in space. This is also known as the downward continuation problem. Thus, it is important to further develop current mathematical methods for such inverse problems. For this, the (Learning) Inverse Problem Matching Pursuits ((L)IPMPs) have been developed within the last decade. Their unique feature is the combination of local as well as global trial functions in the approximative solution of an inverse problem such as the downward continuation of the gravitational potential. In this way, they harmonize the ideas of a traditional spherical harmonic ansatz and the radial basis function approach. Previous publications on these methods showed proofs of concept. Here, we consider the methods for high-dimensional experiments settings with more than 500\,000 grid points which yields a resolution of 20\,km at best on a realistic satellite geometry. We also explain the changes in the methods that had to be done to work with such a large amount of data. The corresponding code (updated for big data use) is available at \textcolor{blue}{\protect{\url{https://doi.org/10.5281/zenodo.8223771}}} under the licence CC BY-NC-SA 3.0 Germany.
\end{abstract}

\paragraph{Keywords}
inverse problems, matching pursuits, numerical modelling, satellite geo\-desy, gravitational potential, downward continuation, high-performance computing

\paragraph{MSC(2020)}
\textit{31B20, 41A45, 65D15, 65J20, 65K10, 65R32, 68T05, 86A22}

\paragraph{Acknowledgments}
The authors gratefully acknowledge the financial support by the German Research Foundation (DFG; Deutsche Forschungsgemeinschaft), project MI 655/14-1. Further, we are grateful for using the HPC Clusters Horus and Omni maintained by the ZIMT of the University of Siegen for our numerical results. Last but not least, we thank the HPC support team, in particular Dipl.-Inf. Monika Harlacher, for their suggestions on code optimization.

\section{Introduction}
\label{sect:intro}
Time-dependent gravity data from satellite missions like GRACE and GRACE-FO are commonly used in order to monitor the mass transport of the Earth, see e.\,g.\ \cite{Chenetal2022,Fischeretal2013-2,IPCC2023,Landereretal2020,NasaConsensu,Saemianetal2022,Tapleyetal2019,Wieseetal2022} as well as the results of the DFG SPP 1257 (2006-2014) coordinated by Ilk and Kusche, see e.\,g. \cite{Kuscheetal2012}. For literature regarding GRACE and GRACE-FO data, see e.\,g.  \cite{Devaraju2017,Flechtneretal2014,Flechtneretal2014-2,GRACEdata,Schmidtetal2008,Tapleyetal2004,GRACEdata2}. This enables the observation of climate change seen in melting glaciers but also droughts and seasonal phenomena like the wet season in the Amazon basin. These insights are obtained from time variations relative to a temporal mean. 

Modelling the gravitational potential means solving the inverse problem of the downward continuation, see e.\,g. \cite[chapters 3 and 5]{Freedenetal2004} or \cite{Baur2014,Moritz2010}:
\begin{align}
V\left(\sigma\eta\right) 
= \left( \T f \right) \left(\sigma\eta\right) 
= \sum_{n=0}^\infty \sum_{j=-n}^n f_{n,j}\sigma^{-n-1}Y_{n,j}\left(\eta\right),
\label{eq:gravpotatorbit}
\end{align}
which holds pointwise and is represented here for a spherical Earth with radius 1. Note that here $f$ is the desired potential on the Earth's surface and $V$ is the measured potential on a satellite orbit. The expansion here uses spherical harmonics $Y_{n,j}$ and the base of the singular values $\sigma > 1$ is the ratio of the satellite orbit and the Earth's radius. The coefficients $f_{n,j}$ refer to the expansion of $f$ at the Earth's surface. Due to the exponentially decreasing singular values of the upward continuation operator $\T$, this is by nature an exponentially ill-posed inverse problem. For details on inverse problems, see \cite{Engletal1996,Louis1989,Michel2005,Michel2020,Rieder2003}. 

Ill-posed inverse problems can be tackled with different approaches. Most of them have in common that the solution is approximated by a specific, a-priori chosen basis. For GRACE and GRACE-FO data, these are typically either spherical harmonics, radial basis functions or mascons. Spherical harmonics represent the traditional approach, see e.\,g.\, \cite{Chenetal2021}. However, it produced a North-South striping and sophisticated methods for a destriping were accompanied by a signal loss, see e.\,g.\, \cite{Chenetal2021,Watkinsetal2015}. Higher resolutions without the need for destriping methods were obtained by using mascons, see e.\,g.\, \cite{Luthckeetal2013,Saveetal2016,Watkinsetal2015}. 

Spherical harmonics are polynomials and, thus, are perfectly localized in the frequency but not the spatial domain. Mascons, however, are perfectly spatially bounded trial functions and, thus, are not localized in the frequency domain. In this respect, they represent two opposing types of trial functions from the broad range that is known in mathematics. As usual, both of them have by construction benefits and disadvantages (beyond the mentioned GRACE-related destriping aspects): with spherical harmonics, local errors oscillate globally while mascons are prone to overfitting if their underlying geometry is chosen sub-optimal, see \cite{Watkinsetal2015}. Moreover, recently, \cite{Guentneretal2023} showed that the calculation of water mass loss out of GRACE and GRACE-FO observations can notably differ depending on the chosen model. In this particular case, this might be attributable to the selection between spherical harmonics and mascons as trial functions. Radial basis functions yield a trade-off between the extremals of orthogonal polynomials on the one hand and mascons / a dirac Delta on the other hand. Their scaling parameter can additionally be used to balance spatial and spectral localization according to the specific needs, see e.\,g.\,\cite{Freedenetal1999,Freedenetal2018}.

Due to the drawbacks of a-priori choosing the basis, an alternative was implemented in terms of the LIPMP algorithms, see \cite{Fischeretal2013-1,Kontaketal2018-1,Michel2015-2,Micheletal2017-1,Micheletal2016-1,Schneideretal2022}. In these methods, the signal, e.g. the gravitational potential $f$, given by values of $V = \T f$ at discrete points, is approximated by a linear combination of weighted dictionary elements. The corresponding dictionary is a set of diverse trial functions, it often contains global as well as local ones. In the case of the downward continuation, we propose to use spherical harmonics as well as radial basis functions and wavelets. In particular, we use the Abel--Poisson kernel and wavelet. In comparison to mascons, which are similar to finite elements, these have certain mathematical benefits such as a closed form. Note that they can be defined by spherical harmonics in a series representation. 

The approximation obtained from an LIPMP algorithm is usually a combination of different basis functions. In this respect, we try to harmonize the extremes of an ansatz using either only global or only local trial functions. The dictionary elements of our approximation are chosen iteratively, that means, in each iteration, we choose one further dictionary element to be added to the linear combination. The dictionary element is chosen such that the Tikhonov functional is minimized. Due to the learning add-on, the dictionary in use is infinite as we allow the methods to choose a radial basis function and wavelet to be freely picked from the infinite range of their characteristic parameters. Note that this remedies the need to a-priori choose a finite set of trial functions which is the equivalent struggle to determining a suitable mascon geometry, see e.\,g.\, \cite{Watkinsetal2015}.

As we saw in \cite{Micheletal2018-1,Schneider2020,Schneideretal2022}, the learning add-on reduces the computational costs enormously. Thus, it enables us to consider the use of more data. While we showed in the mentioned literature that the LIPMPs can work with regular and irregular data grids, we here did not only increase the number of grid points but also allowed a realistic observing geometry as dictated by the GRACE orbits. In particular, we used the kinematic orbits of the Institute of Geodesy at the TU Graz (\cite{Suesser-Rechberger2022}) of December 2022 from GRACE-FO 1 and 2 such that we now include slightly more than 500\,000 grid points in our methods. We demonstrate here that we are able to work with a sensible amount of data on a realistic distribution. Thus, in our experiments, we will approximate the full potential and not monthly deviations as we did in \cite{Schneider2020,Schneideretal2022}. 

However, this demanded a revision of the implementation of the methods on different levels. First, we improved  the efficiency of the source code. Moreover, we discovered that the computation of certain inner products of two radial basis functions and / or wavelets consumed a large amount of the overall runtime. Previously, we used the Clenshaw algorithm for it which is usually suitable for such tasks. However, due to the learning add-on, the number of calls increased massively.  We derived a closed form for this kind of inner products which is naturally much more efficient and more accurate than any computation of a truncation. At last, in \cite{Schneider2020,Schneideretal2022}, we included Slepian functions in the (L)IPMPs for the first time. In the aftermath, we recognized that including them increased the runtime massively. Thus, we concluded that it seems more sensible to abstain from using this type of trial functions (and risk a few more iterations doing so). Similarly, in our experiments here, we also abstain from the orthogonalization routine in the Learning Regularized Orthogonal Functional Matching Pursuit (LROFMP) and only work with the Learning Regularized Functional Matching Pursuit (LRFMP). Note, however, that the corresponding source code to this paper contains all (L)IPMPs, that is, it also includes the (L)ROFMP in the currently most efficient variant.

We have already introduced the basic geodetic aspects for the downward continuation of the gravitational potential from a satellite orbit above. Next, we present our choices of dictionary elements in \cref{ssect:basics:tfcs} and the method in \cref{ssect:basics:lrfmp}. At last, we present our numerical results in \cref{sect:numerics}. In \cref{sect:app1}, the interested reader finds the proofs for the closed forms of the inner products of two filters. 

\subsection{Notation}
\label{ssect:intro:nota}
The set of real numbers is denoted by $\real$. Moreover, let $\real^d$ be the corresponding real, $d$-dimensional vector space. The set of positive integers is denoted by $\nat$ and $\nat_0$ if we include 0 as well. The unit sphere is abbreviated by $\Omega\coloneqq \{x \in \real^3 :\ |x|=1\}$ and the open unit ball is given by $\ball \coloneqq \ball^3 \coloneqq \{x \in \real^3 :\ |x|< 1\}$. Recall that, for $\eta(\lon,t) \in \Omega$, we can use spherical coordinates: the longitude $\lon \in [0,2\pi[$ and the polar distance $t = \cos(\lat) \in [-1,1]$ for the co-latitude $\lat \in [0,\pi]$. That is, we have 
\begin{align}
\Omega \ni \eta(\lon,t) \coloneqq \left( \begin{matrix}
\sqrt{1-t^2}\cos(\lon)\\
\sqrt{1-t^2}\sin(\lon)\\
t
\end{matrix} \right).
\end{align}

\section{Mathematical basics}
\label{sect:basics}

Next we summarize necessary mathematical basics. In particular, we will define the trial functions under consideration, the approximation methods and the aspects we developed for high-dimensional experiments.

\subsection{Trial functions}
\label{ssect:basics:tfcs}

The trial functions we consider as dictionary elements for the downward continuation of the gravitational potential from satellite data are: spherical harmonics (SHs) as well as Abel-Poisson kernels (APKs) and wavelets (APWs). The kernels serve as radial basis functions and are low-pass filters. The wavelets act as the respective radial basis wavelets and are band-pass filters. Thus, we include global and local functions in the dictionary. Examples are given in \cref{fig:tfcs}.

As global polynomials on $\Omega$ we use SHs $Y_{n,j}$. They are characterized by a discrete degree $n \in \nat_0$ and order $j=-n,...,n$. An example often used in practice is given by the fully normalized SHs 
\begin{align}
Y_{n,j}(\eta(\lon,t)) \coloneqq p_{n,j}P_{n,|j|}(t) \left\{ \begin{matrix} \cos(j\lon),&j\leq 0\\ \sin(j\lon),&j> 0 \end{matrix} \right. \label{def:fnsh}
\end{align}
where $\eta(\lon,t) \in \Omega$, $p_{n,j}$ is an $\Lp{2}$-normalization and $P_{n,|j|}$ is an associated Legendre function with $P_n \coloneqq P_{n,0}$ being the Legendre polynomial. Further details can be found, e.\,g., in \cite[chapter 3]{Freedenetal1998}, \cite[chapter 3]{Freedenetal2009} and \cite{Mueller1966}.

APKs $K(x,\cdot)$ and APWs $W(x,\cdot)$ are localized radial basis functions. They come in handy in practice due to their closed form (see e.g. \cite[Example 6.23]{Michel2013}):
\begin{align}
K(x,\eta) &\coloneqq \frac{1-|x|^2}{4\pi(1+|x|^2-2x\cdot\eta)^{3/2}} = \sum_{n=0}^\infty \frac{2n+1}{4\pi} |x|^n P_n \left(\frac{x}{|x|}\cdot\eta\right)\label{def:apk}
\intertext{and}
W(x,\eta) &\coloneqq K(x,\eta) - K(|x|x,\eta)	\label{def:apw}
\end{align}
for $\eta \in \Omega$. Obviously, they are characterized by $x\in\ball$. Further details on radial basis functions can be found, e.\,g., in  \cite[chapter 5]{Freedenetal1998}, \cite[section 2.3]{Freedenetal2004}, \cite{Freedenetal1998-1}, \cite[chapter 7 and 10]{Freedenetal2009}, \cite{Freedenetal1996}, \cite[chapter 6]{Michel2013} and \cite{Windheuser1995}.
The dictionary is then defined as follows: 
\begin{align}
[N]_{\SH} &\coloneqq \left\{Y_{n,j}(\cdot)\ \colon\ (n,j) \in N \subseteq \N \right\}, \label{def:tfc:sh} \qquad
\N \coloneqq \left\{(n,j)\ \colon\ n\in \nat_0,\ j=-n,...,n\right\}\\
[B_K]_{\APK} &\coloneqq \left\{ K(x,\cdot)\ \colon\ x \in B_K \subseteq \ball \right\}	\label{def:tfc:apk}\\
[B_W]_{\APW} &\coloneqq \left\{ W(x,\cdot)\ \colon\  x \in B_W \subseteq \ball \right\} \label{def:tfc:apw}\\
\dic &\coloneqq [N]_{\SH} \cup [B_K]_{\APK} \cup [B_W]_{\APW}.\label{def:dic}
\end{align}
Please refer to \cite{Michel2020,Micheletal2018-1,Schneider2020,Schneideretal2022} for details on a general dictionary definition.

\begin{figure}
\centering
\includegraphics[width=.32\textwidth]{"./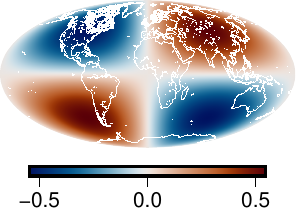"}\\[2\baselineskip]
\includegraphics[width=.49\textwidth]{"./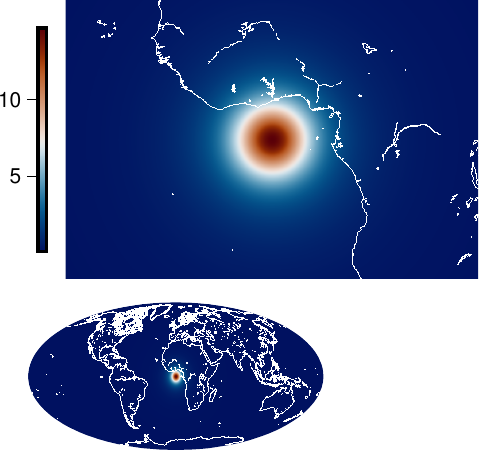"}
\includegraphics[width=.49\textwidth]{"./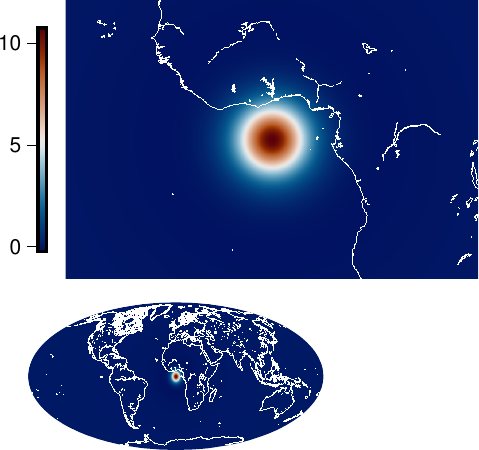"}
\caption{Examples of dictionary elements. Upper row: Fully normalized spherical harmonic $Y_{2,1}$. Lower row: Abel--Poisson kernel $K(x,\cdot)$ (left) and Abel--Poisson wavelet $W(x,\cdot)$ (right) both for $x=(0.9,0,0)^\trans$ on a global scale and zoomed-in on their extremum.}
\label{fig:tfcs}
\end{figure}

\subsection{The LRFMP}
\label{ssect:basics:lrfmp}

A wide a range of literature exists on the LIPMP algorithms and their applicability, see \cite{Fischeretal2013-1,Kontaketal2018-1,Michel2015-2,Micheletal2017-1,Micheletal2016-1,Prakashetal2020,Telschowetal2018,Schneideretal2022} and the references therein. In our numerical experiments here, we will only use the LRFMP. Thus, we summarize this method next and refer to the mentioned literature for more insights.

\subsubsection{Main approach}
\label{sssect:basics:lrfmp:general}

First we need a mathematical description of the problem at hand. Let the unit sphere $\Omega$ denote the Earth's surface. Then the downward continuation of the gravitational potential from satellite data to the Earth's surface is described by $y=\T_\daleth f$ with the data $y \in \real^\ell,\ y_i = V(\sigma\eta^i)$, the satellite height $\sigma >1$, grid points $\eta^i \in \Omega$ for $i=1,..., \ell$ at the Earth's surface and the discretization operator $\T_\daleth f \coloneqq ((\T f)(\sigma\eta^i))_{i=1,...,\ell}$ with the upward continuation operator $\T$ as in \cref{eq:gravpotatorbit}. We desire an approximation of $f$. As we have seen in \cref{sect:intro}, this is an ill-posed inverse problem which makes a regularization inevitable. The LRFMP implements a strategy for minimizing the Tikhonov-Phillips functional. This minimization is obtained in an iterative process: starting with an initial (guessed) approximation $f_0$, e.\ g. $f_0 \equiv 0$, one additional weighted dictionary element $d_n \in \dic$ is chosen in each iteration:
\begin{align}
f_N &= f_0 + \sum_{n=1}^N \alpha_n d_n. \label{eq:fn}
\end{align}
As we have seen that our dictionary consists of diverse trial functions, the approximation $f_N$ will usually be built in a heterogeneous basis. This is intentional as it allows to combine all benefits from each type of trial function while reducing possible repercussions (such as global effects due to local anomalies). Besides the approximation, we have in each iteration a residual:
\begin{align}
R^{N+1} &\coloneqq R^N - \alpha_{N+1}\T_\daleth d_{N+1} \label{def:RN}
\end{align}
with $R^0 = y-\T_\daleth f_0$. In each iteration $N$, the weights $\alpha_{N+1} \in \real$ and the basis function $d_{N+1} \in \dic$ shall be chosen such that the Tikhonov-Phillips functional is minimized. That is, for $\lambda>0$, we minimize
\begin{align}
(\alpha, d) \mapsto \left\| R^N - \alpha \T_\daleth d \right\|^2_{\real^\ell} + \lambda\left\|f_N+\alpha d\right\|^2_{\Hs{2}}	\label{eq:TFNO}
\end{align}
From experience, we choose the Sobolev space $\Hs{2} \subset \Lp{2}$ for the penalty term. This Sobolev space is defined as the completion of the set of all square-integrable functions with
\begin{align}
\|F\|^2_{\Hs{2}} \coloneqq \sum_{n=0}^\infty \sum_{j=-n}^n (n+0.5)^4 \langle F, Y_{n,j} \rangle^2_{\Lp{2}} < \infty, 
\end{align}
see e.\,g. \cite[pp.\ 82--83]{Freedenetal1998} and \cite[pp.\ 150--151]{Michel2013}. The minimization of the Tikhonov-Phillips functional is, in practice, exchanged by the equivalent maximization of
\begin{align}
\RFMP(d;N) &\coloneqq \frac{\left( \left\langle R^N, \T_\daleth d\right\rangle_{\real^\ell} - \lambda\left\langle f_N,d \right\rangle_{\Hs{2}} \right)^2}{\left\|\T_\daleth d\right\|_{\real^\ell}^2 + \lambda\left\|d\right\|^2_{\Hs{2}}}
\eqqcolon \frac{\left(A_N(d)\right)^2}{B_N(d)}.\label{def:RFMP(d;N)}
\end{align}
The weights are then straightforwardly obtained by 
\begin{align}
\alpha_{N+1} &\coloneqq \frac{A_N(d_{N+1})}{B_N(d_{N+1})}.	\label{def:alphan}
\end{align}
This maximization is an optimization problem in a function space of diverse trial functions. Thus we propose to first maximize \cref{def:RFMP(d;N)} for each type of trial functions separately. This yields again a finite dictionary of (optimized) candidates (at least one per type of trial function) from which we can choose the overall maximizer by pairwise comparing the values for the candidates in \cref{def:RFMP(d;N)}. The optimization for each type of trial functions can be done in two ways, depending on whether their characteristic parameters are discrete (as in the case of the SHs) or continuous (as in the case of the APKs and APWs). In the former case, we do a pairwise comparison of the objective function \cref{def:RFMP(d;N)} up to a large, a-priori fixed maximal degree. In the latter case, we obtain a true non-linear constrained optimization problem with respect to the characteristic parameters for which we can utilize established optimization routines. Note that in previous publications the determination of the dictionary of candidates was also described as a learning add-on. 

By construction, we need to define termination criteria for the LRFMP. Usually, the methods terminate if the relative data error falls below a certain threshold like the noise level (see also the Morozov discrepancy principle, \cite{Morozov1966}) or if a certain number of iterations is reached. 

\subsubsection{Aspects for upscaling}
\label{sssect:basics:lrfmp:novel}
The previous publications on the (L)RFMP, \cite[section 5.4]{Michel2020}, \cite{Micheletal2018-1,Schneider2020,Schneideretal2022}, showed tests with up to $\approx$ 12\,500 data points. This is by far not enough for the following reasons: on the one hand, each satellite of the GRACE-FO mission does $\approx$ 8\,500 measurements each day; on the other hand, the EGM2008 is a model of the gravitational potential in spherical harmonics up to degree 2190 and order 2159. That means, current models usually use much more data which is also available. As we have seen in the mentioned literature, the computational cost could be massively reduced by using the LRFMP instead of a non-learning IPMP variant. However, this was not the case in all experiments and the remaining runtime costs were still too expensive for high-dimensional data usage. Thus, we did a revision of the methods, the settings and the implementation in order to increase the number of used data. 

First of all, in the first publications on the LIPMP, we included Slepian functions in the dictionary as well. Slepian functions are band-limited weighted linear combinations of spherical harmonics which are (thus) perfectly localized in the frequency and optimally localized in space, see \cite{Albertellaetal1999,Simonsetal2006}. We originally included Slepian functions hoping that it would reduce the number of iterations and, thus, the size of the best basis. However, we saw that learning Slepian functions made up a large amount of the runtime and it appears that the cost of learning Slepian functions is not outweighed by less iterations. The large numerical effort was caused by the optimization routine as part of the learning process. Thus, we abstain from learning Slepian functions for now. Possibly, future research might find a way for a more efficient learning.

A first implementation was improved by certain optimization hints based on \cite{Hageretal2011}. We also tested whether our implementation would benefit if we switched from C/C++ to the high-level programming language Julia. However, this was, in our case, not very fruitful such that we stick to the C/C++ source code.

Besides those optimization aspects, we also checked the runtime with gprof, see e.g. \cite{GNUgprof}. This is a well-established open-source profiling tool for runtime analysis. This drew our attention to the Sobolev inner product of two radial basis functions or wavelets or a mixture of one radial basis function and one radial basis wavelet. In the objective function \cref{def:RFMP(d;N)}, we use the Sobolev norm of the current approximation $f_N$ and a dictionary element $d$ in the numerator and the Sobolev norm of this dictionary element in the denominator. As we have $f_N = f_0 + \sum_{n=1}^N \alpha_n d_n$, the former one also depends only on the inner product of two dictionary elements $\langle d,\tilde{d}\rangle_{\Hs{2}}$. In the case that $d,\tilde{d} \in [B_K]_{\APK}\cup [B_W]_{\APW}$, the inner product is a series of weighted Legendre polynomials, see e.g. \cite{Schneider2020}. Traditionally, such a series is approximated by a truncation where the latter can be efficiently computed with the Clenshaw algorithm, see e.g. \cite{Deuflhard1976}. As these terms are part of the objective function \cref{def:RFMP(d;N)}, the algorithm has to be called in each function evaluation within the learning optimization of the kernels and wavelets in each iteration of the LRFMP. Moreover, the values obviously cannot be preprocessed due to the learning optimization. With these many calls of the Clenshaw algorithm, however, it cannot be most efficient anymore. 

As the truncation of the obtained series is a question of accuracy anyway, we are concerned with finding a closed form for the Sobolev inner products
\begin{align}
\langle K(x,\cdot),K(\tilde{x},\cdot)\rangle_{\Hs{2}} &= \sum_{n=0}^\infty (n+0.5)^4 \left(|x|\left|\tilde{x}\right|\right)^n \frac{2n+1}{4\pi} P_n\left(\frac{x}{|x|}\cdot \frac{\tilde{x}}{|\tilde{x}|}\right)  \\
\langle K(x,\cdot),W(\tilde{x},\cdot)\rangle_{\Hs{2}} &= \sum_{n=0}^\infty (n+0.5)^4 \left(|\tilde{x}|^n - |\tilde{x}|^{2n}\right)\left|x\right|^n \frac{2n+1}{4\pi} P_n\left(\frac{x}{|x|}\cdot \frac{\tilde{x}}{|\tilde{x}|}\right)\\
\langle W(x,\cdot),W(\tilde{x},\cdot)\rangle_{\Hs{2}} &= \sum_{n=0}^\infty (n+0.5)^4 \left(|x|^n - |x|^{2n}\right) \left(\left|\tilde{x}\right|^n - \left|\tilde{x}\right|^{2n}\right) \\&\qquad\qquad\qquad\qquad\qquad\qquad\qquad \times\frac{2n+1}{4\pi} P_n\left(\frac{x}{|x|}\cdot \frac{\tilde{x}}{|\tilde{x}|}\right),
\end{align}
see e.g. \cite[section 3.5]{Schneider2020}. Obviously, these inner products are all sums of terms of the form
\begin{align}
\frac{1}{4\pi} \sum_{n=0}^\infty (n+0.5)^4(2n+1)(h^{m}\tilde{h}^{\tilde{m}})^n P_n\left(\frac{x}{|x|}\cdot \frac{\tilde{x}}{|\tilde{x}|}\right) 
\eqqcolon 
\frac{1}{4\pi} \sum_{n=0}^\infty (n+0.5)^4(2n+1)q^n P_n(\tau),\label{eq:commonterminIPs}
\end{align}
where $h=|x|,\ \tilde{h}=|\tilde{x}| \in [0,1[,\ m,\ \tilde{m} \in \{1,2\}$. Note that $m$ and $\tilde{m}$ depend on the considered dictonary element: whether it is an Abel-Poisson kernel or wavelet. Further note that, if we want to use a gradient-based optimization method in the learning process, we also need to compute the gradient of \cref{eq:commonterminIPs} with respect to $x= (h/|x|)x$. We immediately obtain for \cref{eq:commonterminIPs}
\begin{align}
\frac{1}{4\pi} \sum_{n=0}^\infty (n+0.5)^4(2n+1)q^n P_n(\tau)
&= \frac{1}{64\pi} \sum_{n=0}^\infty (2n+1)^5q^n P_n(\tau)\\
&= \frac{1}{64\pi} \sum_{n=0}^\infty \sum_{k=0}^5 \binom{5}{k}(2n)^k q^n P_n(\tau)\\
&= \frac{1}{64\pi} \sum_{k=0}^5 \binom{5}{k}2^k \sum_{n=0}^\infty n^k q^n P_n(\tau),
\end{align}
where the latter formulation is possible due to the absolute convergence of the series. Thus, we are left with finding a closed form for 
\begin{align}
\sum_{n=0}^\infty n^k q^n P_n(\tau).
\end{align}
It is well-known, see for instance \cite[equation (5.6.17)]{Freedenetal1998} and \cite[Example 6.23]{Michel2013}, that
\begin{align}
\sum_{n=0}^\infty q^n P_n(\tau) = \frac{1}{\sqrt{1 + q^2 - 2q\tau}} \eqqcolon \phi(q).
\label{eq:genfunAPK}
\end{align}
Note that, in Cartesian coordinates, we have
\begin{align}
\phi(q) = \phi(|x|) 
&= \left(1+|x|^2 - 2|x|\frac{x}{|x|}\cdot\frac{\tilde{x}}{|\tilde{x}|}\right)^{-1/2}
= \left(1+|x|^2 - 2x\cdot\frac{\tilde{x}}{|\tilde{x}|}\right)^{-1/2}\\
&= \left(|\tilde{\eta}|^2 +|x|^2 - 2x\cdot\tilde{\eta}\right)^{-1/2}
= \left(|\tilde{\eta}-x|^2\right)^{-1/2}
= \frac{1}{|\tilde{\eta}-x|},
\end{align}
which is well-known to be infinitely often continuously differentiable if $x\not=\eta$. The latter is true because $\eta\in\Omega$ and $|x|=q\in[0,1[$. Note that a spherical gradient is given by
\begin{align}
\nabla &= \era(\lon,t) \pdervr + \frac{1}{r} \left( \ephi(\lon,t) \frac{1}{\sqrt{1-t^2}}\pdervlon + \ete(\lon,t) \sqrt{1-t^2}\pdervt \right),
\end{align}
see e.g. \cite[Theorem 4.5]{Michel2013}. Then, we obviously have
\begin{align}
q\intd_q = |x| \pdervr = x\cdot\nabla_x,
\end{align}
which is equally well-defined. Hence, with \cref{eq:genfunAPK}, we obtain the following theorem.
\begin{theorem}
Let $k\in \nat_0, q\in [0,1[$ and $\tau \in [-1,1]$. Then it holds
\begin{align}
\sum_{n=0}^\infty n^k q^n P_n(\tau) 
= \underbrace{\left( q \frac{\intd}{\intd q} \right)\cdots\left( q \frac{\intd}{\intd q} \right)}_{k-\mathrm{times}} \phi(q)
\eqqcolon \left( q \frac{\intd}{\intd q} \right)^k \phi(q)
\eqqcolon \left( q \intd_q \right)^k \phi(q).
\end{align}
\end{theorem}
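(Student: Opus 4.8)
The plan is to prove the identity by induction on $k$, exploiting two facts that are already in place: first, $\phi$ is infinitely often continuously differentiable on $[0,1[$ (as observed above, since there $\phi(q)=|\tilde{\eta}-x|^{-1}$ with $|x|=q<1$), so every iterate $(q\intd_q)^k\phi$ is well defined; and second, a power series may be differentiated term by term inside its interval of convergence, which is what lets the operator $q\intd_q$ be moved past the infinite sum. Equivalently, the whole argument is the observation that $q\intd_q$ acts on a power series $\sum_n a_n q^n$ by the coefficient map $a_n\mapsto n a_n$, so applying it $k$ times to $\phi(q)=\sum_{n=0}^\infty P_n(\tau)q^n$ produces $\sum_{n=0}^\infty n^k P_n(\tau)q^n$; I would nonetheless phrase it as an induction for clarity.

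For the base case $k=0$ there is nothing to do: the claimed formula is exactly the generating-function expansion \cref{eq:genfunAPK}, and $(q\intd_q)^0\phi=\phi$ by convention. For the inductive step I would assume $\sum_{n=0}^\infty n^k q^n P_n(\tau)=(q\intd_q)^k\phi(q)$ for all $q\in[0,1[$ and apply $q\intd_q$ to both sides. On the right this yields $(q\intd_q)^{k+1}\phi(q)$ by the definition of the iterated operator. On the left, term-by-term differentiation gives
\begin{align}
q\frac{\intd}{\intd q}\sum_{n=0}^\infty n^k q^n P_n(\tau)
= q\sum_{n=0}^\infty n^{k}\,n\,q^{n-1} P_n(\tau)
= \sum_{n=0}^\infty n^{k+1} q^n P_n(\tau),
\end{align}
which is precisely the statement for $k+1$, closing the induction.

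The only point needing justification --- the hard part, modest as it is --- is the interchange of $\intd/\intd q$ with the infinite sum. This follows from the elementary theory of power series once one records the a priori bound $|P_n(\tau)|\le 1$, valid for every $n$ and every $\tau\in[-1,1]$: it forces $\limsup_{n\to\infty}\bigl(n^k|P_n(\tau)|\bigr)^{1/n}\le 1$, so $\sum_{n=0}^\infty n^k P_n(\tau) z^n$ has radius of convergence at least $1$. Hence on every compact subinterval $[0,\varrho]\subset[0,1[$ the series and its formal term-wise derivative converge uniformly, so summation and differentiation commute there; letting $\varrho\uparrow 1$ covers all of $[0,1[$. No property of Legendre polynomials beyond their sup-norm bound on $[-1,1]$ enters, and the case $q=0$ is trivial on both sides.
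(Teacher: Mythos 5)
Your proposal is correct and follows essentially the same route as the paper's proof: induction on $k$ with base case given by the generating function \cref{eq:genfunAPK}, and the inductive step justified by term-by-term differentiation using the bound $|P_n(\tau)|\le 1$ and uniform convergence on compact subintervals $[0,q_0]\subset[0,1[$. Your explicit remark that the radius of convergence of $\sum_n n^k P_n(\tau)z^n$ is at least $1$ makes the interchange argument slightly more self-contained, but the substance is identical.
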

\begin{proof}
We prove the theorem by induction with respect to $k$. Obviously, the case $k=0$ is given by \cref{eq:genfunAPK}. Hence, we consider $k$ to $k+1$. We start at the right-hand side:
\begin{align}
\left( q \intd_q \right)^{k+1} \phi(q)
&= q \intd_q \left( q \intd_q \right)^k \phi(q)\\
&= q \intd_q \sum_{n=0}^\infty n^k q^n P_n(\tau) 
= q \sum_{n=1}^\infty n^{k+1} q^{n-1} P_n(\tau) 
= \sum_{n=0}^\infty n^{k+1} q^n P_n(\tau),
\end{align}
where we used $|P_n(\tau)| \leq 1$ for all $\tau\in[-1,1]$ and the uniform convergence of the differentiated series for $0\leq q \leq q_0<1$ with a fixed $q_0$. Since, however, this conclusion can be done for all $q_0\in\ ]0,1[$, we obtain the result for all $q\in [0,1[$.
\end{proof}
Thus, we obtain for \cref{eq:commonterminIPs}: 
\begin{align}
\frac{1}{4\pi} &\sum_{n=0}^\infty (n+0.5)^4(2n+1)q^n P_n(\tau)\\
&= \frac{1}{64\pi} \sum_{k=0}^5 \binom{5}{k}2^k \left( q \intd_q \right)^k \phi(q)\\
&= \frac{1}{64\pi}\left(\phi(q) + 10\left( q \intd_q \right)\phi(q) + 40\left( q \intd_q \right)^2\phi(q) + 80\left( q \intd_q \right)^3\phi(q) \right.\\
&\qquad \qquad \qquad \left. +\ 80\left( q \intd_q \right)^4\phi(q) + 32\left( q \intd_q \right)^5\phi(q) \right).
\label{eq:closedformSobIPs}
\end{align}
This leaves us with the determination of $\left( q \intd_q \right)^k \phi(q),\ k=0,...,5,$ for practical purposes. We state here the results. The computation can be found in \cref{ssect:app1:derivsubterms}.
\begin{align}
\phi(q) 
&= \frac{1}{(1+q^2-2q\tau)^{1/2}}, \label{eq:qdq0phi}\\
q \intd_q \phi(q) 
&= \frac{\tau q-q^2}{(1+q^2-2q\tau)^{3/2}},\label{eq:qdq1phi}\\
\left(q \intd_q\right)^2 \phi(q) 
&= \frac{\tau q-2q^2}{(1+q^2-2q\tau)^{3/2}} + \frac{3(\tau q-q^2)^2}{(1+q^2-2q\tau)^{5/2}}, \label{eq:qdq2phi}\\
\left(q \intd_q\right)^3 \phi(q)
&= \frac{\tau q-4q^2}{(1+q^2-2q\tau)^{3/2}} + \frac{9(\tau q-q^2)(\tau q-2q^2)}{(1+q^2-2q\tau)^{5/2}} + \frac{15(\tau q-q^2)^3}{(1+q^2-2q\tau)^{7/2}}, \label{eq:qdq3phi}
\end{align}
\begin{align}
\left(q \intd_q\right)^4 \phi(q)
&= \frac{\tau q-8q^2}{(1+q^2-2q\tau)^{3/2}} + \frac{12(\tau q-4q^2)(\tau q-q^2) + 9(\tau q-2q^2)^2}{(1+q^2-2q\tau)^{5/2}} \\
&\qquad +\frac{90(\tau q-q^2)^2(\tau q-2q^2)}{(1+q^2-2q\tau)^{7/2}} + \frac{105(\tau q-q^2)^4}{(1+q^2-2q\tau)^{9/2}},
\label{eq:qdq4phi}\\
\left(q\intd_q\right)^5 \phi(q)
&= \frac{\tau q-16q^2}{(1+q^2-2q\tau)^{3/2}} 
+\frac{15(\tau q-8q^2)(\tau q-q^2) + 30(\tau q-4q^2)(\tau q-2q^2)}{(1+q^2-2q\tau)^{5/2}} \\
&\qquad + \frac{150(\tau q-4q^2)(\tau q-q^2)^2 + 225(\tau q-2q^2)^2(\tau q-q^2)}{(1+q^2-2q\tau)^{7/2}}\\
&\qquad + \frac{1050(\tau q-q^2)^3(\tau q-2q^2)}{(1+q^2-2q\tau)^{9/2}}  
+ \frac{945(\tau q-q^2)^5}{(1+q^2-2q\tau)^{11/2}} .
\label{eq:qdq5phi}
\end{align}
As mentioned before, for the learning add-on, we need to compute the gradients of these terms as well:
\begin{align}
\nabla_{x} \left[\left(q\intd_q\right)^k \phi(q)\right]
\end{align} 
for $k=0,...,5$. Recall that we have
\begin{align}
q &\coloneqq h^{m}\tilde{h}^{\tilde{m}},\qquad
\tau \coloneqq \frac{x(r,\lon,t)}{|x|}\cdot \frac{\tilde{x}(r,\lon,t)}{|\tilde{x}|} \eqqcolon \xi(\lon,t) \cdot \tilde{\xi}(\lon,t).
\end{align}
Note that we need to compute the gradient with respect to $x(r,\lon,t) = h\xi(\lon,t)$. Before we consider $\nabla_x \left(q\intd_q\right)^k \phi(q)$, we first discuss some auxiliary terms.
We have
\begin{align}
\nabla q = \left\{ \begin{matrix}
\tilde{h}^{\tilde{m}}\era(\lon,t), & m=1,\\
2h\tilde{h}^{\tilde{m}}\era(\lon,t), &m=2.
\end{matrix} \right.
\end{align}
Unfortunately, this is not well-defined for $x=0$ if $m=1$ because $\era(\lon,t)$ is not declared. However, we will see that we can remedy this circumstance later. Next, we obtain\\
\begin{align}
q\nabla \tau 
&= \tilde{h}^{\tilde{m}}h^{m-1}\left( \ephi(\lon,t) \frac{1}{\sqrt{1-t^2}}\partial_\lon + \ete(\lon,t) \sqrt{1-t^2}\partial_t \right) \left(\tilde{\xi}  \cdot \xi(\lon,t)\right)\\
&= \tilde{h}^{\tilde{m}}h^{m-1}\left( \ephi(\lon,t) \frac{1}{\sqrt{1-t^2}}\left(\tilde{\xi}  \cdot \partial_\lon \xi(\lon,t)\right) + \ete(\lon,t) \sqrt{1-t^2}\left(\tilde{\xi} \cdot \partial_t \xi(\lon,t)\right) \right)\\
&= \tilde{h}^{\tilde{m}}h^{m-1}\left( \ephi(\lon,t) \frac{1}{\sqrt{1-t^2}}\left(\tilde{\xi}  \cdot \partial_\lon \era(\lon,t)\right) + \ete(\lon,t) \sqrt{1-t^2}\left(\tilde{\xi}  \cdot \partial_t \era(\lon,t)\right) \right)\\
&= \tilde{h}^{\tilde{m}}h^{m-1}\left( \ephi(\lon,t) \frac{1}{\sqrt{1-t^2}}\sqrt{1-t^2}\left(\tilde{\xi}  \cdot \ephi(\lon,t)\right) \right. \\&\qquad\qquad\qquad\qquad\left. + \ete(\lon,t) \sqrt{1-t^2}\frac{1}{\sqrt{1-t^2}}\left(\tilde{\xi} \cdot \ete(\lon,t)\right) \right)\\
&= \tilde{h}^{\tilde{m}}h^{m-1}\left( \ephi(\lon,t) \left(\tilde{\xi}  \cdot \ephi(\lon,t)\right) + \ete(\lon,t)\left(\tilde{\xi}  \cdot \ete(\lon,t)\right) \right).
\end{align}
Note that this can also be proved using the well-known $i_{\mathrm{tan}}$-tensor. Further note that, with $m\in\{1,2\}$, the term is well-defined for all $\tilde{h},\ h \in [0,1[$. \\
In $\nabla_x (q\intd_q)^k \phi(q)$, we will obtain below the latter two derivatives combined in terms of the form
\begin{align}
(\tau-2^nq)\nabla q + q\nabla \tau
\end{align}
for $n\in\nat_0$.
As mentioned before, $\nabla_x q$ has a critical point $x=0$ if $m=1$, though $\phi$ and $q\intd_q \phi$ do not. Thus, we consider this case in detail:
\begin{align}
&(\tau-2^nq)\nabla q + q\nabla \tau \\
&=\left(\tilde{\xi} \cdot \xi(\lon,t) - 2^n \tilde{h}^{\tilde{m}}h\right) \tilde{h}^{\tilde{m}}\era(\lon,t) \\
&\qquad + \tilde{h}^{\tilde{m}}\left( \ephi(\lon,t) \left(\tilde{\xi}  \cdot \ephi(\lon,t)\right) + \ete(\lon,t) \left(\tilde{\xi} \cdot \ete(\lon,t)\right) \right)\\
&=\tilde{h}^{\tilde{m}}\era(\lon,t)\left(\tilde{\xi} \cdot \era(\lon,t)\right) + \tilde{h}^{\tilde{m}}\ephi(\lon,t) \left(\tilde{\xi}  \cdot \ephi(\lon,t)\right) + \tilde{h}^{\tilde{m}}\ete(\lon,t) \left(\tilde{\xi}  \cdot \ete(\lon,t)\right)  \\
&\qquad - 2^n \tilde{h}^{\tilde{m}}h \tilde{h}^{\tilde{m}}\era(\lon,t) \\
&=\tilde{h}^{\tilde{m}-1}\tilde{x} - 2^n \tilde{h}^{2\tilde{m}}x,
\end{align}
which is well-defined for $x=0$ and $\tilde{x}=0$ for all $n\in\nat$ and can be continously extended to $x=0$. In particular, we have that 
\begin{align}
(\tau-2^nq)\nabla q + q\nabla \tau = \tilde{h}^{\tilde{m}-1}\tilde{x} = \tilde{h}^{\tilde{m}}\tilde{\xi}
\end{align}
if $x=0$. All other cases can be computed with the formulas for $\nabla q$ and $q\nabla t$ as stated before. Now, we consider $\nabla (q\intd_q)^k \phi(q)$ for $k=0,...,5$. Again the derivations are given in \cref{ssect:app1:derivderivsubterms} and we only state the results here.
\begin{align}
\nabla \phi(q) 
&= \frac{(\tau-q)\nabla q + q \nabla \tau}{(1+q^2-2q\tau)^{3/2}}, \label{eq:nablaqdq0phi}\\
\nabla \left[q\intd_q \phi(q)\right]
&= \frac{(\tau - 2q)\nabla q + q\nabla \tau}{(1+q^2-2q\tau)^{3/2}}
+\frac{3(\tau q-q^2)((\tau-q)\nabla q + q \nabla \tau)}{(1+q^2-2q\tau)^{5/2}},\label{eq:nablaqdq1phi}\\
\nabla \left[\left(q\intd_q\right)^2 \phi(q)\right]
&= \frac{(\tau -4q) \nabla q + q\nabla\tau }{(1+q^2-2q\tau)^{3/2}} \\
&\qquad + \frac{3(\tau q-2q^2)((\tau - q) \nabla q + q\nabla\tau) + 6(\tau q-q^2)((\tau -2q) \nabla q + q\nabla\tau)}{(1+q^2-2q\tau)^{5/2}}\\
&\qquad + \frac{15(\tau q-q^2)^2((\tau - q) \nabla q + q\nabla\tau)}{(1+q^2-2q\tau)^{7/2}}, \label{eq:nablaqdq2phi}\\
\end{align}
\begin{landscape}
\begin{align}
\nabla \left[\left(q\intd_q\right)^3 \phi(q)\right]
&= \frac{(\tau - 8q)\nabla q + q\nabla\tau}{(1+q^2-2q\tau)^{3/2}} 
+ \frac{3(\tau q-4q^2)((\tau - q)\nabla q + q\nabla\tau)}{(1+q^2-2q\tau)^{5/2}}\\ 
&\qquad + \frac{9((\tau - 2q)\nabla q + q\nabla\tau)(\tau q-2q^2) + 9(\tau q-q^2)((\tau - 4q)\nabla q + q\nabla\tau)}{(1+q^2-2q\tau)^{5/2}}\\ 
& \qquad + \frac{45(\tau q-q^2)(\tau q-2q^2)((\tau - q)\nabla q + q\nabla\tau) + 45(\tau q-q^2)^2((\tau - 2q)\nabla q + q\nabla\tau)}{(1+q^2-2q\tau)^{7/2}}\\
&\qquad+ \frac{105(\tau q-q^2)^3((\tau - q)\nabla q + q\nabla\tau)}{(1+q^2-2q\tau)^{9/2}}, \label{eq:nablaqdq3phi}\\ \\
\nabla \left[\left(q\intd_q\right)^4 \phi(q)\right]
&=\frac{(\tau- 16q)\nabla q + q \nabla \tau}{(1+q^2-2q\tau)^{3/2}} \\
&\qquad +\frac{3(\tau q-8q^2)((\tau- q)\nabla q + q \nabla \tau) + 12((\tau- 8q)\nabla q + q \nabla \tau)(\tau q-q^2)}{(1+q^2-2q\tau)^{5/2}}\\
&\qquad + \frac{12(\tau q-4q^2)((\tau- 2q)\nabla q + q \nabla \tau) + 18(\tau q-2q^2)((\tau- 4q)\nabla q + q \nabla \tau)}{(1+q^2-2q\tau)^{5/2}} \\
&\qquad +\frac{60(\tau q-4q^2)(\tau q-q^2) ((\tau- q)\nabla q + q \nabla \tau)
+ 45(\tau q-2q^2)^2)((\tau- q)\nabla q + q \nabla \tau)}{(1+q^2-2q\tau)^{7/2}} \\
&\qquad + \frac{180(\tau q-q^2)((\tau- 2q)\nabla q + q \nabla \tau)(\tau q-2q^2) + 90(\tau q-q^2)^2((\tau- 4q)\nabla q + q \nabla \tau)}{(1+q^2-2q\tau)^{7/2}} \\
&\qquad +\frac{630(\tau q-q^2)^2(\tau q-2q^2)((\tau- q)\nabla q + q \nabla \tau) + 420(\tau q-q^2)^3((\tau- 2q)\nabla q + q \nabla \tau)}{(1+q^2-2q\tau)^{9/2}}\\
&\qquad +\frac{945(\tau q-q^2)^4((\tau- q)\nabla q + q \nabla \tau)}{(1+q^2-2q\tau)^{11/2}},\label{eq:nablaqdq4phi}
\end{align}
\begin{align}
\nabla \left[\left(q\intd_q\right)^5 \phi(q)\right]
&= \frac{(\tau -32q)\nabla q + q\nabla \tau}{(1+q^2-2q\tau)^{3/2}} \\
&\qquad + \frac{3(\tau q-16q^2)((\tau - q)\nabla q + q \nabla \tau)+ 15((\tau - 16q)\nabla q + q \nabla \tau)(\tau q-q^2)}{(1+q^2-2q\tau)^{5/2}} \\
&\qquad + \frac{15(\tau q-8q^2)((\tau - 2q)\nabla q + q \nabla \tau) + 30((\tau - 8q)\nabla q + q \nabla \tau)(\tau q-2q^2)}{(1+q^2-2q\tau)^{5/2}} \\
&\qquad + \frac{30(\tau q-4q^2)((\tau - 4q)\nabla q + q \nabla \tau)}{(1+q^2-2q\tau)^{5/2}} \\
&\qquad + \frac{75(\tau q-8q^2)(\tau q-q^2)((\tau - q)\nabla q + q \nabla \tau) + 150(\tau q-4q^2)(\tau q-2q^2)((\tau - q)\nabla q + q \nabla \tau)}{(1+q^2-2q\tau)^{7/2}}\\
&\qquad + \frac{150((\tau - 8q)\nabla q + q \nabla \tau)(\tau q-q^2)^2 + 300(\tau q-4q^2)(\tau q-q^2)((\tau - 2q)\nabla q + q \nabla \tau)}{(1+q^2-2q\tau)^{7/2}}\\
&\qquad + \frac{450(\tau q-2q^2)((\tau - 4q)\nabla q + q \nabla \tau)(\tau q-q^2) + 225(\tau q-2q^2)^2((\tau - 2q)\nabla q + q \nabla \tau)}{(1+q^2-2q\tau)^{7/2}}\\
&\qquad + \frac{1050(\tau q-4q^2)(\tau q-q^2)^2((\tau - q)\nabla q + q \nabla \tau) + 1575(\tau q-2q^2)^2(\tau q-q^2)((\tau - q)\nabla q + q \nabla \tau)}{(1+q^2-2q\tau)^{9/2}}\\
&\qquad + \frac{3150(\tau q-q^2)^2((\tau - 2q)\nabla q + q \nabla \tau)(\tau q-2q^2) + 1050(\tau q-q^2)^3((\tau - 4q)\nabla q + q \nabla \tau)}{(1+q^2-2q\tau)^{9/2}} \\
&\qquad +\frac{9450(\tau q-q^2)^3(\tau q-2q^2)((\tau - q)\nabla q + q \nabla \tau) + 4725(\tau q-q^2)^4((\tau - 2q)\nabla q + q \nabla \tau)}{(1+q^2-2q\tau)^{11/2}} \\
&\qquad + \frac{10395(\tau q-q^2)^5((\tau - q)\nabla q + q \nabla \tau)}{(1+q^2-2q\tau)^{13/2}}. \label{eq:nablaqdq5phi}
\end{align}
\end{landscape}

\section{Numerics}
\label{sect:numerics}

\subsection{General setting}
\label{ssect:numerics:genset}

As mentioned before, we approximate the Earth's surface with the unit sphere. The efficient singular value decomposition of the operator $\T$ is then given in \cref{eq:gravpotatorbit}. With this, the upward continued value of the gravitational potential can be computed via Level 2 (or comparable) data. That means, we use downloadable data that gives us the coefficients of a truncation of \cref{eq:gravpotatorbit}. In detail, we use the EGM2008, see e.\,g. \cite{Pavlisetal2012}, as well as GRACE-FO Level-2 Monthly Geopotential Spherical Har\-mon\-ics XXX Release 6.0 (RL06), where XXX stands for JPL, CSR and GFZ, from December 2022 (data files GSM-2\_2022335-2022365\_GRFO\_XXXXX\_BB01\_0600, where XXXXX stands for GFZOP, JPLEM and UTCSR). The latter can be downloaded from NASA’s Earth Observing System Data and Information System (EOSDIS) (\textcolor{blue}{\url{https://www.earthdata.nasa.gov/}}). For the GRACE data, we build the ensemble mean among the different origins as was advised in \cite{Sakumuraetal2014}. 
Removing the Earth's ellipticity is conventionally done by subtracting the normal field. However, we consider it sufficient for our purposes to only neglect the degree 2 part of the field. Therefore, we obtain 
\begin{align}
V\left(\sigma\eta\right) 
= \left( \T f \right) \left(\sigma\eta\right) 
\approx \sum_{n=3}^N \sum_{j=-n}^n f_{n,j}\sigma^{-n-1}Y_{n,j}\left(\eta\right),
\label{eq:gravpotatorbit_trunc}
\end{align}
where the truncation stops for degree $N=2190$ and up to order $2159$ in the case of the EGM2008 and for degree $N=96$ and all orders for GRACE-FO. We perform two experiments: we evaluate this truncation for the EGM2008 and GRACE-FO data, respectively, on the kinematic satellite orbits of GRACE-FO-1 and -2 from December 2022. The orbits originate from the Institute of Geodesy at the TU Graz, see \cite{Suesser-Rechberger2022}, and can be obtained from \textcolor{blue}{\url{https://www.tugraz.at/institute/ifg/downloads/satellite-orbit-products}}. For each twin satellite GRACE-FO-1 and -2, there exists a set of $\approx 8\,500$ Cartesian coordinates for each mission day (data files GRACEFO-x\_kinematicOrbit\_2022-12-dd, where x is either 1 or 2 and dd stands for 01,...,31). That makes roughly 250\,000 data points for each month for each twin satellite. Using both data at once gives us the desired 500\,000 grid points. In particular, in December 2022, we have 529\,832 grid points. Note that the coordinates of the grid points are given in the celestial reference frame. For conversion to the terrestial frame, we use software from the International Astronomical Union's SOFA service (see \textcolor{blue}{\url{https://www.iausofa.org/index.html}}). In particular, we considered section 5 in the documentation given in \cite{SOFAdocu}. We consider the tracks as satellite orbits above a perfectly spherical Earth. For this purpose, we adjust the radial coordinates of the orbit tracks for the case of the Earth as a unit ball. The corresponding satellite heights vary around 500\,km, see \cref{fig:satheights}. 
\begin{wrapfigure}{r}{0.5\textwidth}
\centering
\begin{subfigure}{0.5\textwidth}
	\includegraphics[width=\textwidth]{"./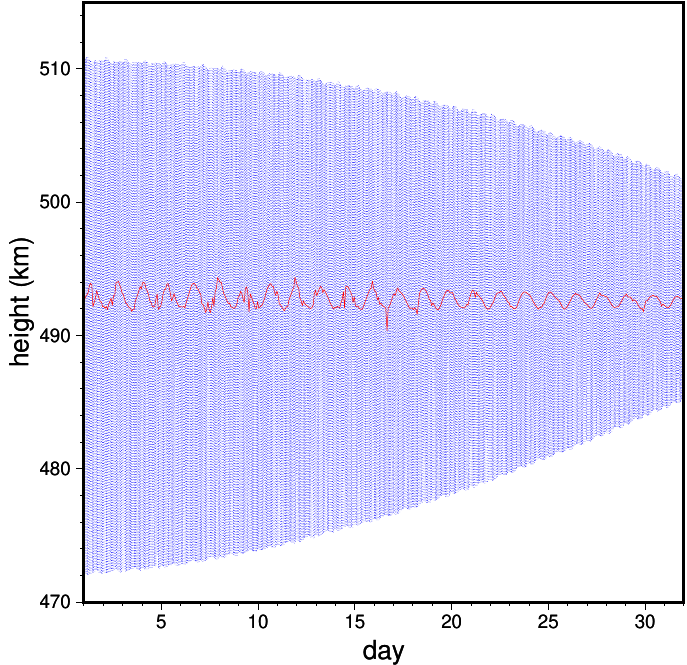"}
	\caption{Heights over the spherical Earth of the kinematic orbit tracks of GRACE-FO 1 in December 2022. Every fifth data point is plotted. The red line gives the mean per revolution. Note that GRACE-FO 2 repeats this pattern.}
\end{subfigure}\\[\baselineskip]
\begin{subfigure}{0.5\textwidth}
	\includegraphics[width=\textwidth]{"./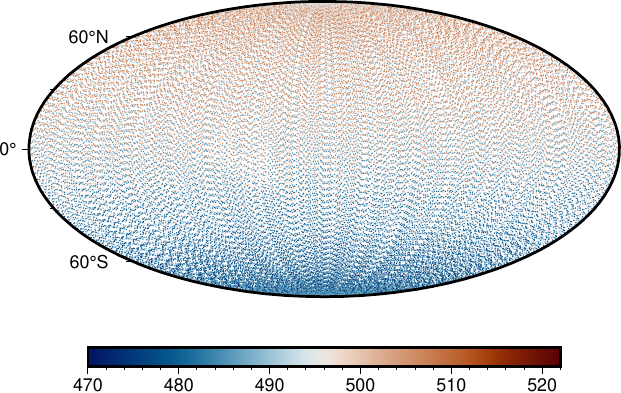"}
	\caption{Distribution of satellite tracks. Every fifth data point is plotted. The colour gives the absolute satellite height in km.}
\end{subfigure}
\caption{Distribution of kinematic orbits.}
\label{fig:satheights}
\end{wrapfigure}
Further, we perturb the data with noise where we do a simplified stochastic approach here. The perturbed data $y^\delta$ given by 
\begin{align}
y^\delta_i = y_i \cdot \left( 1 + 0.05 \cdot \varepsilon_i\right)
\label{eq:noise}
\end{align}
for the unperturbed data $y_i = \left( \T f \right) \left(\sigma^i\eta^i\right)$ and a unit normally distributed random number $\varepsilon_i$. Our noise model is certainly not realistic due to the very complicated anisotropic error behaviour of, for example, GRACE and GRACE-FO. Moreover, additive noise might better represent the practical situation. However, it suffices for our purposes of testing the general numerical performance of our method in the case of a large amount of data. Since we continuously try to improve our method, we will also take into account more realistic noise models in the future. \\
For evaluation of the solution and the approximation, we consider a Driscoll Healy grid, see e.g. \cite{DriscollHealy1994}. We use 901 latitudinal steps and 1801 longitudinal steps which makes up a grid of $\kappa = 1662701$ points in total which we use to plot the approximation. Note that this grid is larger then the used data.

The resolution of the used data and the data grids are evaluated in \cref{tab:resolution}. Note that the resolution of the data depends on the number of extrema associated to the largest spherical harmonic order as we have at most $2|j|$ extrema at the equator and the resolution of the data grid depends on the (approximate) number of grid points at the equator. For the orbit tracks, we note that each satellite circles the Earth roughly every 90 minutes. In December 2022, this amounts to approximately 496 revolutions for each satellite. Per revolution, the two satellites cross the equator twice, hence, we have $2\cdot 2\cdot 496 = 1984$ points at the equator, which is a resolution of $\approx$ 20 km at best. We see that the EGM2008 is a bit undersampled with the used orbit tracks as well as the Driscoll-Healy grid. The GRACE-FO data is, however, more accurately sampled than necessary. Hence, we assume that these data are suitable to describe the behaviour of the LRFMP with these many grid points. Note that the Driscoll-Healy grid obviously does not have a better resolution at the equator than the satellite orbits. However, recall that the grid points of the Driscoll-Healy grid have a higher resolution towards the poles.
\begin{table}[t]
\centering
\begin{tabular}{lcc}
& resolution at 500 km & resolution at surface\\
\hline
EGM2008 	& 9.998 	& 9.271 \\
GRACE FO & 224.953 	& 208.490 \\
\hline
kinematic satellite orbits & 20.176 & ---\\
Driscoll Healy grid & ---& 22.227\\
\end{tabular}
\caption{Comparison of the resolution of the data and the used data grids. All values in km. Note that the Driscoll Healy grid is used only for evaluation at Earth's modelled surface and the kinematic orbits are only used in space.}
\label{tab:resolution}
\end{table}
The Driscoll-Healy grid is also used for the relative root mean square error (RRMSE)
\begin{align}
\sqrt{\frac{\sum_{i=1}^{\kappa} (f_\nu(\tilde{\eta}^i) - f(\tilde{\eta}^i))^2}{\sum_{i=1}^{\kappa} (f(\tilde{\eta}^i))^2}},
\end{align}
where $f$ is the evaluation of \cref{eq:gravpotatorbit_trunc} with $\sigma=1$ and $f_\nu$ is the approximation after $\nu$ iterations. Besides the relative RMSE, we also consider the relative data error (RDE) $\|R^\nu\|_{\real^\ell}/\|R^0\|_{\real^\ell}$ and the absolute approximation error. 

\begin{wraptable}{r}{0.5\textwidth}
\centering
\begin{tabular}{lcc}
$\lambda$	& RDE & RRMSE\\
\hline
$10^{-8}$ 	& 0.050\,496 	& 0.176\,465 \\
$10^{-9}$ 	& 0.050\,169 	& 0.176\,140 \\
$10^{-10}$ 	& 0.050\,055 	& 0.180\,222 \\
\hline
$10^{-8}$ 	& 0.050\,589 	& 0.175\,218 \\
$10^{-9}$ 	& 0.050\,239 	& 0.175\,039 \\
$10^{-10}$ 	& 0.050\,149 	& 0.181\,298 \\
\hline
\end{tabular}
\caption{Comparison of the RDE and the RRMSE for different regularization parameters $\lambda$ for the EGM2008 data (upper row) and the GRACE FO data (lower row).}
\label{tab:regpars}
\end{wraptable} 

We terminate the algorithm after 1\,600 iterations for the EGM2008 data and after 1\,400 iterations for the GRACE FO data. Note that the former truncates the potential later. Thus, we allow more iterations.

We tested several regularization parameters and, for the most suitable ones, obtained the relative data and root mean square error as given \cref{tab:regpars}. We consider these parameters the most suitable ones as the relative data error has almost reached the noise level of $5\%$. Obviously, we have lower data errors for a lower regularization parameter. However, it is well-known that an approximation might be affected by overfitting if the regularization parameter is chosen too low. As we aim here for a demonstration regarding the applicability to a large amount of data, we have chosen to present the results for the parameter $\lambda = 10^{-8}$ as they appear to have overall the least overfitting identifyable in their visualization. We refer an in-depth regularization study to future research. 

For the optimization problems in the learning add-on, we utilize methods from the NLOpt library, see \cite{NLOpt2019}. In particular, we use the ORIG\_DIRECT\_L (globally) and the SLSQP (locally) algorithms. The constraints are tightened by $10^{-8}$. The absolute tolerance for the change of the objective function between two successive iterates and of the iterates themselves is $10^{-8}$. We allow 10\,000 function evaluations and 600 seconds for each optimization.
As the starting dictionary, we define 
\begin{align}
\left[N^\mathrm{s}\right]_\SH &= \left\{ Y_{n,j}\ \left|\ n=0,...,96;\ j=-n,...,n \right.\right\}	\\
\left[B_K^\mathrm{s}\right]_\APK &= \left\{ K(x,\cdot)\ \left|\ |x| = 0.94,\ \frac{x}{|x|} \in X^\mathrm{s} \right. \right\}\\
\left[B_W^\mathrm{s}\right]_\APW &= \left\{ W(x,\cdot)\ \left|\ |x| = 0.94,\ \frac{x}{|x|} \in X^\mathrm{s} \right.\right\}\\
\dic^{\mathrm{s}} &= \left[N^\mathrm{s}\right]_{\SH}  \cup  \left[B_K^\mathrm{s}\right]_{\APK} \cup \left[B_W^\mathrm{s}\right]_{\APW}
\end{align}
with a regularly distributed Reuter grid $X^\mathrm{s}$ of $123$ grid points. Thus, the starting dictionary contains 9655 trial functions. This allows the experiments of the LIPMP algorithms to run on an HPC node of 256 GB RAM with 64 CPUs. Note that the number of CPUs is more relevant than the RAM in our experiments.

The corresponding code is available at \textcolor{blue}{\protect{\url{https://doi.org/10.5281/zenodo.8223771}}} under the licence CC BY-NC-SA 3.0 Germany. Note that we also include the RFMP, the ROFMP and the LROFMP in this source code. Their implementation is tested for errors and obviously is the most efficient variant of these methods which has been published so far. For more details on them, we refer the interested reader to the previously mentioned literature. Further note that, in particular, the (L)ROFMP needs additional parameters to be set before running the method.

\begin{figure}
\centering
\includegraphics[width=.95\textwidth]{"./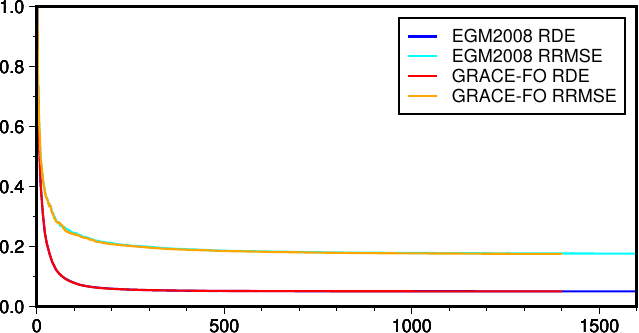"}
\caption{RDEs and RRMSEs along the iterations.}
\label{fig:errors}
\end{figure}

\begin{landscape}
\begin{figure}
\centering
\includegraphics[width=.45\textwidth]{"./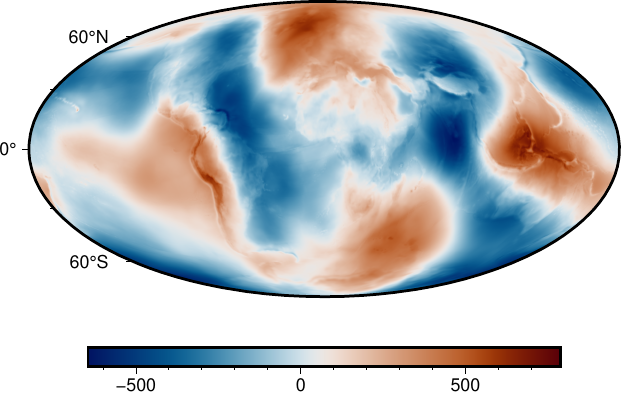"}
\includegraphics[width=.45\textwidth]{"./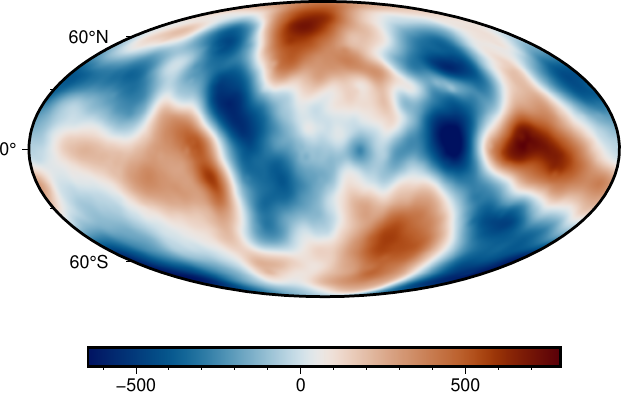"}
\includegraphics[width=.45\textwidth]{"./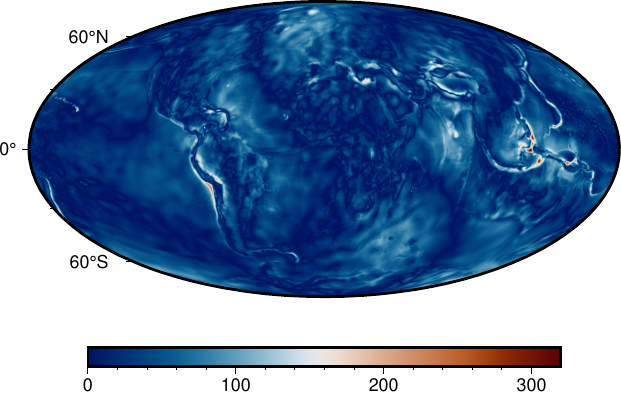"}
\includegraphics[width=.45\textwidth]{"./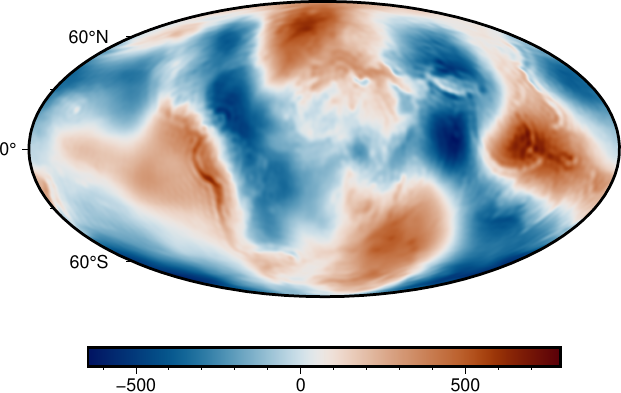"}
\includegraphics[width=.45\textwidth]{"./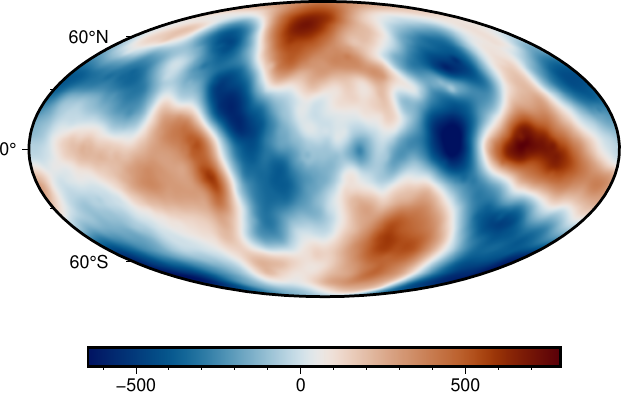"}
\includegraphics[width=.45\textwidth]{"./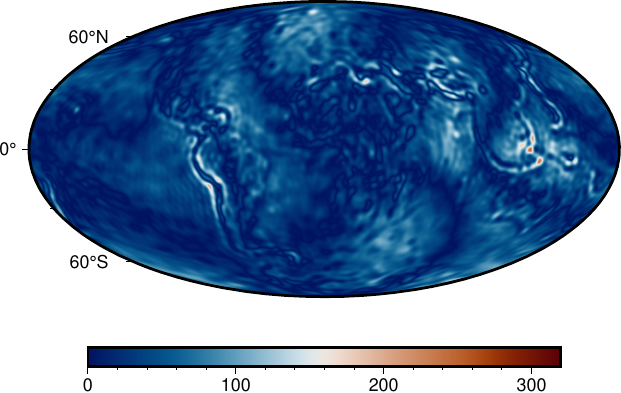"}
\caption{Left column: the respective truncated gravitational potential. Middle column: the obtained approximation. Right column: the absolute approximation error. The upper row contains these visualizations regarding the EGM2008 data. The lower row contains the respective ones regarding the GRACE FO data. All values in $\mathrm{m}^2/\mathrm{s}^2$. The scales are adapted for better comparability.}
\label{fig:pics}
\end{figure}
\end{landscape}

\subsection{Results and evaluation}
\label{ssect:numerics:res_eval}

In \cref{tab:regpars}, we see the final RDE and RRMSE for the chosen regularization parameter $\lambda=10^{-8}$. The RDE is almost at the noise level for both data sets. Moreover, also the RRMSE is similar at approximately $17.5\%$. This shows the severe ill-posedness of the considered problem. In \cref{fig:errors}, we see how these errors decrease along the iterations. Both errors plummet in the beginning. In later iterations, only small progress is made. This gives rise to the question whether we could allow also much less iterations for similar results. This emphasizes that the LRFMP is able to reproduce high-dimensional data in a sparse best basis.

In \cref{fig:pics}, we see the truncated gravitational potentials (left column) of the EGM2008 data (upper row) and the used GRACE FO data (lower row), respectively. Moreover, we give the respective approximations (middle column) and the absolute approximation errors (right column). Note that the scales are adapted for better comparability.  First of all, we notice that, for both data sets, the remaining errors lie in regions with high local structures like the Andes, the Himalayas and the Pacific ring of fire. This is due to the used satellite height of roughly 500 km which makes for a loss of data that cannot be retrieved in the inversion process (compare with \cite{Schneideretal2022}). Further, it can be explained by the remaining $\approx 5\%$ RDE which may include some data alongside pure noise as well. When comparing the absolute approximation error of the EGM2008 data and the GRACE FO data, we notice that the extreme error values are larger in the case of the EGM2008 data. Note, however, that the EGM2008 data gives the gravitational potential truncated at degree 2190 while the GRACE-FO data truncates the series at degree 96. Moreover, besides these extreme values, most of the errors appear to be of the same size.

All in all, the results show that we were able to model the LRFMP for high-dimensional data. We discover the expectable behaviour of approximation methods in the results. In particular, we see that remaining errors lie in regions with high local structures. Moreover, we also gained insights into the next aspects that should be discussed for further development of our experiments. These are the determination of a minimal number of iterations for similar good results and the investigation of parameter choice methods for the regularization parameter. Further, as we mentioned before, the use of a realistic noise model should also be investigated next.

\section{Conclusion and Outlook}
\label{sect:cons}
We started our considerations with the modelling of the gravitational potential from satellite data. This is necessary in several geoscientific applications such as the visualization of climate change phenomena. 

In our approach, we aim to approximate the potential with one of the LIPMP algorithms. This enables us to combine different types of trial functions: (global) polynomials and (local) radial basis functions. Thus, the approach unites the specific benefits of the various basis functions. In contrast to previous methods, it also harmonizes the ideas of the spherical harmonic representation and the use of local trial functions, like e.\,g.\, mascons. In \cite{Schneideretal2022,Schneider2020}, we extensively showed that the methods can in principle be used for gravitational modelling. However, the grids of data points were rather small ($\approx$ 12\,500) and artificial. Here, we showed what can be done to improve the efficiency in order to be able to use a sensible amount of data, such as 500\,000 grid points, on a realistic satellite track distribution. 

Firstly, we reconsidered some modelling aspects like the choice of types of trial functions as well as the most suitable variant of the LIPMPs from the previous experiment settings. We decided to abstain from optimizing Slepian funcions (that we used before) and choose the LRFMP for upscaling our experiments. Furthermore, the revision of the code also pointed towards certain series that hindered the use of a large amount of data. These series represent inner products of two radial basis functions and/or wavelets in use. Here, we showed that we can substitute the inefficient and inaccurate truncated series with a fast and accurate closed form. 

In our experiments, we used the revised LRFMP algorithm for approximating the gravitational potential from the EGM2008 as well as the GRACE-FO data from December 2022. We computed the gravitational data on kinematic orbit tracks. The results show that, in this setting, we obtain good approximative solutions for this exponentially ill-posed inverse problem with the LRFMP while remaining errors lie in regions with highly local structures. The latter can be explained amongst others by the use of roughly 500\,km satellite height. 

Hence, we showed that the methods can be tailored for high-dimensional tasks even though this modelling is quite task specific. Therefore, it is an open question whether the discussed aspects can be transferred to other inverse problems easily. 

In the development of our methods towards competitiveness in the geodetic community, next, we have to consider the missing aspects that are necessary for implementing real GRACE and GRACE-FO data. This includes most importantly testing observable-depending data like Level 1B or range rates, but also allowing the true and stochastically complicated noise of GRACE and GRACE-FO. 

Both aspects are starting points for future research though we consider the former to be implemented more easily. In particular, if the data source changes, so does the representation of the data. That means, mathematically, if the operator $\T$ changes, so do depending terms such as $\T d$ for a dictionary element $d$. With the status-quo reached in this paper, in particular, the ability to work with a sensible amount of data, it appears that this challenge can now be tackled.

On the remaining aspect of a true stochastic GRACE (-FO) noise model, we have a more conservative outlook as of now. This is because it is highly difficult to appraise the behaviour of the methods for this experiment design. In general, the appropriate modelling of noise characteristics within the theory of inverse problems has been more stepmotherly treated so far but is currently also a topic of ongoing research in the inverse problems community. The further enhancement of the LIPMPs might profit from it.

\newpage

\section*{Declarations}

\paragraph{Funding} VM gratefully acknowledges the support by the German Research Foundation (DFG; Deutsche Forschungsgemeinschaft), project MI 655/14-1. 
\paragraph{Conflicts of interest/Competing interests} Not applicable.
\paragraph{Availability of data, material and code}  The Level 2 GRACE data are downloaded from NASA’s Earth Observing System Data and Information System (EOSDIS) \textcolor{blue}{\url{https://www.earthdata.nasa.gov/}}. The satellite tracks used are downloaded from \textcolor{blue}{\url{https://www.tugraz.at/institute/ifg/downloads/satellite-orbit-products}}. For details, see also \cite{Suesser-Rechberger2022}. The conversion of the position coordinates of the satellite tracks from the celestial to the terrestial frame is based on \textcolor{blue}{\url{https://www.iausofa.org/index.html}}. From these data, we obtained the upward continued potential as well as the evaluated potential at the Earth's surface. These point values and the source code are available at \textcolor{blue}{\protect{\url{https://doi.org/10.5281/zenodo.8223771}}} under the licence CC BY-NC-SA 3.0 Germany.
\paragraph{Author's contributions} The research was carried out during the postdoc project of N. Schneider. In this partially DFG-funded project, VM is the principal investigator. N. Sneeuw is the project partner from geosciences. Both supervise the ongoing project and assist N. Schneider. 

\footnotesize
\bibliography{biblioneu}
\normalfont

\appendix
\newpage
\section{Derivations in detail}
\label{sect:app1}
\subsection{Derivation of subterms for closed formula of inner products}
\label{ssect:app1:derivsubterms}
Note that we also give the names of the terms used in the source code in each final formula after the derivation.
We obtain
\begin{align}
\phi(q) 
&= \frac{1}{(1+q^2-2q\tau)^{1/2}} 
\label{eq:qdq0phi}
\eqqcolon \frac{1}{\mathrm{denom}}\\ \notag\\
q \intd_q \phi(q) 
&= -\frac{1}{2} \frac{1}{(1+q^2-2q\tau)^{3/2}}(2q-2\tau)q
= \frac{\tau q-q^2}{(1+q^2-2q\tau)^{3/2}}
\label{eq:qdq1phi}
\eqqcolon \frac{\mathrm{tqmqsq}}{\mathrm{denom3}}\\ \notag\\
\left(q \intd_q\right)^2 \phi(q)
&= q\left( \frac{\tau-2q}{(1+q^2-2q\tau)^{3/2}} - \frac{3}{2}\frac{\tau q-q^2}{(1+q^2-2q\tau)^{5/2}}(2q-2\tau) \right)\\
&= \frac{\tau q-2q^2}{(1+q^2-2q\tau)^{3/2}} + \frac{3(\tau q-q^2)^2}{(1+q^2-2q\tau)^{5/2}}
\label{eq:qdq2phi}\\\notag\\
&\eqqcolon \frac{\mathrm{tqm2qsq}}{\mathrm{denom3}} + \frac{3\mathrm{tqmqsq2}}{\mathrm{denom5}}\\
\left(q \intd_q\right)^3 \phi(q)
&= q\left( \frac{\tau-4q}{(1+q^2-2q\tau)^{3/2}} - \frac{3}{2}\frac{\tau q-2q^2}{(1+q^2-2q\tau)^{5/2}}(2q-2\tau) \right. \\ 
&\qquad \left. + \frac{6(\tau q-q^2)(\tau-2q)}{(1+q^2-2q\tau)^{5/2}} - \frac{5}{2} \frac{3(\tau q-q^2)^2}{(1+q^2-2q\tau)^{7/2}}(2q-2\tau)\right)\\
&= q\left( \frac{\tau-4q}{(1+q^2-2q\tau)^{3/2}} + \frac{3(\tau q-2q^2)(\tau-q)}{(1+q^2-2q\tau)^{5/2}}\right. \\ 
&\qquad \left. + \frac{6(\tau q-q^2)(\tau-2q)}{(1+q^2-2q\tau)^{5/2}} + \frac{15(\tau q-q^2)^2(\tau-q)}{(1+q^2-2q\tau)^{7/2}}\right)\\
&= \frac{\tau q-4q^2}{(1+q^2-2q\tau)^{3/2}} + \frac{3(\tau q-2q^2)(\tau q-q^2)}{(1+q^2-2q\tau)^{5/2}}\\ 
&\qquad + \frac{6(\tau q-q^2)(\tau q-2q^2)}{(1+q^2-2q\tau)^{5/2}} + \frac{15(\tau q-q^2)^2(\tau q-q^2)}{(1+q^2-2q\tau)^{7/2}}\\
&= \frac{\tau q-4q^2}{(1+q^2-2q\tau)^{3/2}} + \frac{9(\tau q-q^2)(\tau q-2q^2)}{(1+q^2-2q\tau)^{5/2}} + \frac{15(\tau q-q^2)^3}{(1+q^2-2q\tau)^{7/2}}
\label{eq:qdq3phi}\\ \notag \\
&\eqqcolon \frac{\mathrm{tqm4qsq}}{\mathrm{denom3}} + \frac{9\mathrm{tqmqsq*tqm2qsq}}{\mathrm{denom5}} + \frac{15\mathrm{tqmqsq3}}{\mathrm{denom7}}\\
\left(q \intd_q\right)^4 \phi(q)
&=   q\left(
\frac{\tau-8q}{(1+q^2-2q\tau)^{3/2}} 
- \frac{3}{2}\frac{\tau q-4q^2}{(1+q^2-2q\tau)^{5/2}}(2q-2\tau)
\right.\\
&\qquad  + \frac{9(\tau-2q)(\tau q-2q^2)}{(1+q^2-2q\tau)^{5/2}} 
+ \frac{9(\tau q-q^2)(\tau-4q)}{(1+q^2-2q\tau)^{5/2}} 
- \frac{5}{2}\frac{9(\tau q-q^2)(\tau q-2q^2)}{(1+q^2-2q\tau)^{7/2}}(2q-2\tau)
\\ 
&\qquad \left.
+ \frac{45(\tau q-q^2)^2(\tau-2q)}{(1+q^2-2q\tau)^{7/2}}
- \frac{7}{2}\frac{15(\tau q-q^2)^3}{(1+q^2-2q\tau)^{9/2}}(2q-2\tau)
\right)
\end{align}
\begin{align}
&=   q\left(
\frac{\tau-8q}{(1+q^2-2q\tau)^{3/2}} 
+ \frac{3(\tau q-4q^2)(\tau-q)}{(1+q^2-2q\tau)^{5/2}}
\right.\\
&\qquad  + \frac{9(\tau-2q)(\tau q-2q^2)}{(1+q^2-2q\tau)^{5/2}} 
+ \frac{9(\tau q-q^2)(\tau-4q)}{(1+q^2-2q\tau)^{5/2}} 
+\frac{45(\tau q-q^2)(\tau q-2q^2)(\tau-q)}{(1+q^2-2q\tau)^{7/2}}
\\ 
&\qquad \left.
+ \frac{45(\tau q-q^2)^2(\tau-2q)}{(1+q^2-2q\tau)^{7/2}}
+ \frac{105(\tau q-q^2)^3(\tau-q)}{(1+q^2-2q\tau)^{9/2}}
\right)\\
&=   
\frac{\tau q-8q^2}{(1+q^2-2q\tau)^{3/2}} 
+ \frac{3(\tau q-4q^2)(\tau q-q^2)}{(1+q^2-2q\tau)^{5/2}}
\\
&\qquad  + \frac{9(\tau q-2q^2)(\tau q-2q^2)}{(1+q^2-2q\tau)^{5/2}} 
+ \frac{9(\tau q-q^2)(\tau q-4q^2)}{(1+q^2-2q\tau)^{5/2}} 
+\frac{45(\tau q-q^2)(\tau q-2q^2)(\tau q-q^2)}{(1+q^2-2q\tau)^{7/2}}
\\ 
&\qquad 
+ \frac{45(\tau q-q^2)^2(\tau q-2q^2)}{(1+q^2-2q\tau)^{7/2}}
+ \frac{105(\tau q-q^2)^3(\tau q-q^2)}{(1+q^2-2q\tau)^{9/2}}\\
&=   
\frac{\tau q-8q^2}{(1+q^2-2q\tau)^{3/2}} 
+ \frac{3(\tau q-4q^2)(\tau q-q^2)}{(1+q^2-2q\tau)^{5/2}}
\\
&\qquad + \frac{9(\tau q-2q^2)^2}{(1+q^2-2q\tau)^{5/2}} 
+ \frac{9(\tau q-q^2)(\tau q-4q^2)}{(1+q^2-2q\tau)^{5/2}} 
+\frac{45(\tau q-q^2)^2(\tau q-2q^2)}{(1+q^2-2q\tau)^{7/2}}
\\ 
&\qquad 
+ \frac{45(\tau q-q^2)^2(\tau q-2q^2)}{(1+q^2-2q\tau)^{7/2}}
+ \frac{105(\tau q-q^2)^4}{(1+q^2-2q\tau)^{9/2}}\\
&=   
\frac{\tau q-8q^2}{(1+q^2-2q\tau)^{3/2}} + \frac{12(\tau q-4q^2)(\tau q-q^2) + 9(\tau q-2q^2)^2}{(1+q^2-2q\tau)^{5/2}} \\
&\qquad +\frac{90(\tau q-q^2)^2(\tau q-2q^2)}{(1+q^2-2q\tau)^{7/2}} + \frac{105(\tau q-q^2)^4}{(1+q^2-2q\tau)^{9/2}}
\label{eq:qdq4phi}\\ \notag\\
&\eqqcolon \frac{\mathrm{tqm8qsq}}{\mathrm{denom3}} + \frac{12\mathrm{tqm4qsq*tqmqsq} + 9\mathrm{tqm2qsq2}}{\mathrm{denom5}} \\
&\qquad +\frac{90\mathrm{tqmqsq2*tqm2qsq}}{\mathrm{denom7}} + \frac{105\mathrm{tqmqsq4}}{\mathrm{denom9}}\\ \notag\\
&\left(q\intd_q\right)^5 \phi(q)\\
&=q\left( 
\frac{\tau-16q}{(1+q^2-2q\tau)^{3/2}} 
-\frac{3}{2}\frac{\tau q-8q^2}{(1+q^2-2q\tau)^{5/2}}(2q-2\tau)\right.\\
&\qquad + \frac{12(\tau-8q)(\tau q-q^2) + 12(\tau q-4q^2)(\tau-2q) + 18(\tau q-2q^2)(\tau-4q)}{(1+q^2-2q\tau)^{5/2}} \\
&\qquad \qquad - \frac{5}{2}\frac{12(\tau q-4q^2)(\tau q-q^2) + 9(\tau q-2q^2)^2}{(1+q^2-2q\tau)^{7/2}} (2q-2\tau)\\
&\qquad +\frac{180(\tau q-q^2)(\tau-2q)(\tau q-2q^2) + 90(\tau q-q^2)^2(\tau-4q)}{(1+q^2-2q\tau)^{7/2}} \\
& \qquad \qquad - \frac{7}{2}\frac{90(\tau q-q^2)^2(\tau q-2q^2)}{(1+q^2-2q\tau)^{9/2}}(2q-2\tau)\\
&\qquad + \left. \frac{420(\tau q-q^2)^3(\tau-2q)}{(1+q^2-2q\tau)^{9/2}} 
-\frac{9}{2} \frac{105(\tau q-q^2)^4}{(1+q^2-2q\tau)^{11/2}}(2q-2\tau) \right)
\end{align}
\begin{align} 
&=q\left( 
\frac{\tau-16q}{(1+q^2-2q\tau)^{3/2}} 
+\frac{3(\tau q-8q^2)(\tau-q)}{(1+q^2-2q\tau)^{5/2}}\right.\\
&\qquad + \frac{12(\tau-8q)(\tau q-q^2) + 12(\tau q-4q^2)(\tau-2q) + 18(\tau q-2q^2)(\tau-4q)}{(1+q^2-2q\tau)^{5/2}} \\
&\qquad \qquad + \frac{60(\tau q-4q^2)(\tau q-q^2)(\tau-q) + 45(\tau q-2q^2)^2(\tau-q)}{(1+q^2-2q\tau)^{7/2}}\\
&\qquad +\frac{180(\tau q-q^2)(\tau-2q)(\tau q-2q^2) + 90(\tau q-q^2)^2(\tau-4q)}{(1+q^2-2q\tau)^{7/2}} \\
& \qquad \qquad + \frac{630(\tau q-q^2)^2(\tau q-2q^2)(\tau-q)}{(1+q^2-2q\tau)^{9/2}}\\
&\qquad + \left. \frac{420(\tau q-q^2)^3(\tau-2q)}{(1+q^2-2q\tau)^{9/2}} 
+ \frac{945(\tau q-q^2)^4(\tau-q)}{(1+q^2-2q\tau)^{11/2}} \right)\\
&= \frac{\tau q-16q^2}{(1+q^2-2q\tau)^{3/2}} 
+\frac{3(\tau q-8q^2)(\tau q-q^2)}{(1+q^2-2q\tau)^{5/2}}\\
&\qquad + \frac{12(\tau q-8q^2)(\tau q-q^2) + 12(\tau q-4q^2)(\tau q-2q^2) + 18(\tau q-2q^2)(\tau q-4q^2)}{(1+q^2-2q\tau)^{5/2}} \\
&\qquad \qquad + \frac{60(\tau q-4q^2)(\tau q-q^2)(\tau q-q^2) + 45(\tau q-2q^2)^2(\tau q-q^2)}{(1+q^2-2q\tau)^{7/2}}\\
&\qquad +\frac{180(\tau q-q^2)(\tau q-2q^2)(\tau q-2q^2) + 90(\tau q-q^2)^2(\tau q-4q^2)}{(1+q^2-2q\tau)^{7/2}} \\
& \qquad \qquad + \frac{630(\tau q-q^2)^2(\tau q-2q^2)(\tau q-q^2)}{(1+q^2-2q\tau)^{9/2}}\\
&\qquad + \frac{420(\tau q-q^2)^3(\tau q-2q^2)}{(1+q^2-2q\tau)^{9/2}} 
+ \frac{945(\tau q-q^2)^4(\tau q-q^2)}{(1+q^2-2q\tau)^{11/2}} \\
&= \frac{\tau q-16q^2}{(1+q^2-2q\tau)^{3/2}} 
+\frac{3(\tau q-8q^2)(\tau q-q^2)}{(1+q^2-2q\tau)^{5/2}}\\
&\qquad + \frac{12(\tau q-8q^2)(\tau q-q^2) + 30(\tau q-4q^2)(\tau q-2q^2)}{(1+q^2-2q\tau)^{5/2}} \\
&\qquad \qquad + \frac{60(\tau q-4q^2)(\tau q-q^2)^2 + 45(\tau q-2q^2)^2(\tau q-q^2)}{(1+q^2-2q\tau)^{7/2}}\\
&\qquad +\frac{180(\tau q-q^2)(\tau q-2q^2)^2 + 90(\tau q-q^2)^2(\tau q-4q^2)}{(1+q^2-2q\tau)^{7/2}} \\
& \qquad \qquad + \frac{630(\tau q-q^2)^3(\tau q-2q^2)}{(1+q^2-2q\tau)^{9/2}}\\
&\qquad + \frac{420(\tau q-q^2)^3(\tau q-2q^2)}{(1+q^2-2q\tau)^{9/2}} 
+ \frac{945(\tau q-q^2)^5}{(1+q^2-2q\tau)^{11/2}} \\
&= \frac{\tau q-16q^2}{(1+q^2-2q\tau)^{3/2}} \\
&\qquad +\frac{15(\tau q-8q^2)(\tau q-q^2) + 30(\tau q-4q^2)(\tau q-2q^2)}{(1+q^2-2q\tau)^{5/2}} \\
&\qquad + \frac{150(\tau q-4q^2)(\tau q-q^2)^2 + 225(\tau q-2q^2)^2(\tau q-q^2)}{(1+q^2-2q\tau)^{7/2}}\\
&\qquad + \frac{1050(\tau q-q^2)^3(\tau q-2q^2)}{(1+q^2-2q\tau)^{9/2}} \\
&\qquad + \frac{945(\tau q-q^2)^5}{(1+q^2-2q\tau)^{11/2}} 
\label{eq:qdq5phi}
\end{align}
\begin{align}
&\eqqcolon \frac{\mathrm{tqm16qsq}}{\mathrm{denom3}} \\
&\qquad +\frac{15\mathrm{tqm8qsq*tqmqsq} + 30\mathrm{tqm4qsq*tqm2qsq}}{\mathrm{denom5}} \\
&\qquad + \frac{150\mathrm{tqm4qsq*tqmqsq2} + 225\mathrm{tqm2qsq2*tqmqsq}}{\mathrm{denom7}}\\
&\qquad + \frac{1050\mathrm{tqmqsq3*tqm2qsq}}{\mathrm{denom9}} \\
&\qquad + \frac{945\mathrm{tqmqsq5}}{\mathrm{denom11}}
\end{align}

\subsection{Derivation of derivatives of subterms for closed formula of inner products}
\label{ssect:app1:derivderivsubterms}

\begin{align}
&\nabla \phi(q) \\
&= \nabla\left(\frac{1}{(1+q^2-2q\tau)^{1/2}}\right)
= -\frac{1}{2}\frac{1}{(1+q^2-2q\tau)^{3/2}} \nabla(1+q^2-2q\tau)\\
&= -\frac{1}{2}\frac{2q\nabla q - 2\tau\nabla q - 2q \nabla \tau}{(1+q^2-2q\tau)^{3/2}} 
= \frac{\tau\nabla q - q\nabla q + q \nabla \tau}{(1+q^2-2q\tau)^{3/2}} \\
&= \frac{(\tau-q)\nabla q + q \nabla \tau}{(1+q^2-2q\tau)^{3/2}} 
\label{eq:nablaqdq0phi}\\\notag\\
&\eqqcolon \frac{\mathrm{ID}}{\mathrm{denom3}}\\ \notag \\
&\nabla \left[q\intd_q \phi(q)\right]\\
&= \nabla\left(\frac{\tau q-q^2}{(1+q^2-2q\tau)^{3/2}}\right)\\
&= \frac{q\nabla \tau + \tau \nabla q - 2q\nabla q}{(1+q^2-2q\tau)^{3/2}}
- \frac{3}{2}\frac{(\tau q-q^2)(2q\nabla q - 2\tau\nabla q - 2q \nabla \tau)}{(1+q^2-2q\tau)^{5/2}}\\
&= \frac{(\tau - 2q)\nabla q + q\nabla \tau}{(1+q^2-2q\tau)^{3/2}}
+\frac{3(\tau q-q^2)((\tau-q)\nabla q + q \nabla \tau)}{(1+q^2-2q\tau)^{5/2}}
\label{eq:nablaqdq1phi}\\\notag\\
&\eqqcolon \frac{\mathrm{ID2}}{\mathrm{denom3}}
+\frac{3\mathrm{tqmqsq*ID}}{\mathrm{denom5}}\\\notag \\
&\nabla \left[\left(q\intd_q\right)^2 \phi(q)\right]\\
&= \nabla\left(\frac{\tau q-2q^2}{(1+q^2-2q\tau)^{3/2}} + \frac{3(\tau q-q^2)^2}{(1+q^2-2q\tau)^{5/2}}\right)\\
&= \frac{q\nabla\tau + \tau \nabla q - 4q\nabla q}{(1+q^2-2q\tau)^{3/2}} 
- \frac{3}{2}\frac{(\tau q-2q^2)(2q\nabla q - 2\tau\nabla q - 2q \nabla \tau)}{(1+q^2-2q\tau)^{5/2}}\\
&\qquad + \frac{6(\tau q-q^2)(q \nabla \tau + \tau \nabla q - 2q\nabla q)}{(1+q^2-2q\tau)^{5/2}} - \frac{5}{2}\frac{3(\tau q-q^2)^2(2q\nabla q - 2\tau\nabla q - 2q \nabla \tau)}{(1+q^2-2q\tau)^{7/2}}
\end{align}
\begin{align}
&= \frac{(\tau -4q) \nabla q + q\nabla\tau }{(1+q^2-2q\tau)^{3/2}} 
+ \frac{3(\tau q-2q^2)((\tau - q) \nabla q + q\nabla\tau)}{(1+q^2-2q\tau)^{5/2}}\\
&\qquad + \frac{6(\tau q-q^2)((\tau -2q) \nabla q + q\nabla\tau)}{(1+q^2-2q\tau)^{5/2}}
+ \frac{15(\tau q-q^2)^2((\tau - q) \nabla q + q\nabla\tau)}{(1+q^2-2q\tau)^{7/2}}\\
&= \frac{(\tau -4q) \nabla q + q\nabla\tau }{(1+q^2-2q\tau)^{3/2}} \\
&\qquad \qquad + \frac{3(\tau q-2q^2)((\tau - q) \nabla q + q\nabla\tau) + 6(\tau q-q^2)((\tau -2q) \nabla q + q\nabla\tau)}{(1+q^2-2q\tau)^{5/2}}\\
&\qquad\qquad + \frac{15(\tau q-q^2)^2((\tau - q) \nabla q + q\nabla\tau)}{(1+q^2-2q\tau)^{7/2}}
\label{eq:nablaqdq2phi}\\\notag\\
&\eqqcolon \frac{\mathrm{ID4}}{\mathrm{denom3}} + \frac{3\mathrm{tqm2qsq*ID} + 6\mathrm{tqmqsq*ID2}}{\mathrm{denom5}} + \frac{15\mathrm{tqmqsq2*ID}}{\mathrm{denom7}}\\ \ \\
&\nabla \left[\left(q\intd_q\right)^3 \phi(q)\right]\\
&= \nabla\left(\frac{\tau q-4q^2}{(1+q^2-2q\tau)^{3/2}} + \frac{9(\tau q-q^2)(\tau q-2q^2)}{(1+q^2-2q\tau)^{5/2}} + \frac{15(\tau q-q^2)^3}{(1+q^2-2q\tau)^{7/2}}\right)\\
&= \frac{q\nabla\tau + \tau \nabla q - 8q\nabla q}{(1+q^2-2q\tau)^{3/2}} 
- \frac{3}{2}\frac{(\tau q-4q^2)(2q\nabla q - 2\tau\nabla q - 2q \nabla \tau)}{(1+q^2-2q\tau)^{5/2}} \\
&\qquad + \frac{9(q\nabla\tau + \tau \nabla q - 2q\nabla q)(\tau q-2q^2) + 9(\tau q-q^2)(q\nabla\tau + \tau\nabla q - 4q\nabla q)}{(1+q^2-2q\tau)^{5/2}}\\ 
&\qquad \qquad - \frac{5}{2}\frac{9(\tau q-q^2)(\tau q-2q^2)(2q\nabla q - 2\tau\nabla q - 2q \nabla \tau)}{(1+q^2-2q\tau)^{7/2}}\\
&\qquad + \frac{45(\tau q-q^2)^2(q\nabla \tau +\tau\nabla q - 2q\nabla q)}{(1+q^2-2q\tau)^{7/2}}
- \frac{7}{2}\frac{15(\tau q-q^2)^3(2q\nabla q - 2\tau\nabla q - 2q \nabla \tau)}{(1+q^2-2q\tau)^{9/2}}\\
&= \frac{(\tau - 8q)\nabla q + q\nabla\tau}{(1+q^2-2q\tau)^{3/2}} 
+ \frac{3(\tau q-4q^2)((\tau - q)\nabla q + q\nabla\tau)}{(1+q^2-2q\tau)^{5/2}} \\
&\qquad + \frac{9((\tau - 2q)\nabla q + q\nabla\tau)(\tau q-2q^2) + 9(\tau q-q^2)((\tau - 4q)\nabla q + q\nabla\tau)}{(1+q^2-2q\tau)^{5/2}}\\ 
&\qquad \qquad + \frac{45(\tau q-q^2)(\tau q-2q^2)((\tau - q)\nabla q + q\nabla\tau)}{(1+q^2-2q\tau)^{7/2}}\\
&\qquad + \frac{45(\tau q-q^2)^2((\tau - 2q)\nabla q + q\nabla\tau)}{(1+q^2-2q\tau)^{7/2}}
+ \frac{105(\tau q-q^2)^3((\tau - q)\nabla q + q\nabla\tau)}{(1+q^2-2q\tau)^{9/2}}\\
&= \frac{(\tau - 8q)\nabla q + q\nabla\tau}{(1+q^2-2q\tau)^{3/2}} 
+ \frac{3(\tau q-4q^2)((\tau - q)\nabla q + q\nabla\tau)}{(1+q^2-2q\tau)^{5/2}}\\ 
&\qquad + \frac{9((\tau - 2q)\nabla q + q\nabla\tau)(\tau q-2q^2) + 9(\tau q-q^2)((\tau - 4q)\nabla q + q\nabla\tau)}{(1+q^2-2q\tau)^{5/2}}\\ 
& \qquad + \frac{45(\tau q-q^2)(\tau q-2q^2)((\tau - q)\nabla q + q\nabla\tau) + 45(\tau q-q^2)^2((\tau - 2q)\nabla q + q\nabla\tau)}{(1+q^2-2q\tau)^{7/2}}\\
&\qquad+ \frac{105(\tau q-q^2)^3((\tau - q)\nabla q + q\nabla\tau)}{(1+q^2-2q\tau)^{9/2}}
\label{eq:nablaqdq3phi}\\\notag\\
&\eqqcolon \frac{\mathrm{ID8}}{\mathrm{denom3}} 
+ \frac{3\mathrm{tqm4qsq*ID} + 9\mathrm{ID2*tqm2qsq} + 9\mathrm{tqmqsq*ID4}}{\mathrm{denom5}}\\ 
&\qquad+ \frac{45\mathrm{tqmqsq*tqm2qsq*ID} + 45\mathrm{tqmqsq2*ID2}}{\mathrm{denom7}}
+ \frac{105\mathrm{tqmqsq3*ID}}{\mathrm{denom9}}
\end{align}
\begin{align}
&\nabla \left[\left(q\intd_q\right)^4 \phi(q)\right]\\
&= \nabla\left( \frac{\tau q-8q^2}{(1+q^2-2q\tau)^{3/2}} + \frac{12(\tau q-4q^2)(\tau q-q^2) + 9(\tau q-2q^2)^2}{(1+q^2-2q\tau)^{5/2}} \right. \\
&\qquad + \left. \frac{90(\tau q-q^2)^2(\tau q-2q^2)}{(1+q^2-2q\tau)^{7/2}} + \frac{105(\tau q-q^2)^4}{(1+q^2-2q\tau)^{9/2}}\right)\\
&= 
\frac{q\nabla\tau + \tau \nabla q -16q\nabla q}{(1+q^2-2q\tau)^{3/2}} 
-\frac{3}{2}\frac{(\tau q-8q^2)(2q\nabla q - 2\tau\nabla q - 2q \nabla \tau)}{(1+q^2-2q\tau)^{5/2}}\\ 
&\qquad + \frac{12(q\nabla\tau + \tau\nabla q - 8q\nabla q)(\tau q-q^2)
+ 12(\tau q-4q^2)(q\nabla\tau + \tau\nabla q-2q\nabla q)}{(1+q^2-2q\tau)^{5/2}} \\
&\qquad+ \frac{18(\tau q-2q^2)(q\nabla\tau + \tau\nabla q - 4q\nabla q)}{(1+q^2-2q\tau)^{5/2}} \\
&\qquad - \frac{5}{2}\frac{(12(\tau q-4q^2)(\tau q-q^2) + 9(\tau q-2q^2)^2)(2q\nabla q - 2\tau\nabla q - 2q \nabla \tau)}{(1+q^2-2q\tau)^{7/2}} \\
&\qquad + \frac{180(\tau q-q^2)(q\nabla \tau + \tau\nabla q - 2q\nabla q)(\tau q-2q^2)}{(1+q^2-2q\tau)^{7/2}} \\
&\qquad + \frac{90(\tau q-q^2)^2(q\nabla\tau+\tau \nabla q-4q\nabla q)}{(1+q^2-2q\tau)^{7/2}} \\
&\qquad -\frac{7}{2}\frac{90(\tau q-q^2)^2(\tau q-2q^2)(2q\nabla q - 2\tau\nabla q - 2q \nabla \tau)}{(1+q^2-2q\tau)^{9/2}} \\
&\qquad + \frac{420(\tau q-q^2)^3(q\nabla\tau+\tau\nabla q - 2q\nabla q)}{(1+q^2-2q\tau)^{9/2}}\\
&\qquad -\frac{9}{2}\frac{105(\tau q-q^2)^4(2q\nabla q - 2\tau\nabla q - 2q \nabla \tau)}{(1+q^2-2q\tau)^{11/2}}\\
&=\frac{(\tau- 16q)\nabla q + q \nabla \tau}{(1+q^2-2q\tau)^{3/2}} \\
&\qquad +\frac{3(\tau q-8q^2)((\tau- q)\nabla q + q \nabla \tau) + 12((\tau- 8q)\nabla q + q \nabla \tau)(\tau q-q^2)}{(1+q^2-2q\tau)^{5/2}}\\
&\qquad + \frac{12(\tau q-4q^2)((\tau- 2q)\nabla q + q \nabla \tau) + 18(\tau q-2q^2)((\tau- 4q)\nabla q + q \nabla \tau)}{(1+q^2-2q\tau)^{5/2}} \\
&\qquad +\frac{60(\tau q-4q^2)(\tau q-q^2) ((\tau- q)\nabla q + q \nabla \tau)
+ 45(\tau q-2q^2)^2)((\tau- q)\nabla q + q \nabla \tau)}{(1+q^2-2q\tau)^{7/2}} \\
&\qquad + \frac{180(\tau q-q^2)((\tau- 2q)\nabla q + q \nabla \tau)(\tau q-2q^2) + 90(\tau q-q^2)^2((\tau- 4q)\nabla q + q \nabla \tau)}{(1+q^2-2q\tau)^{7/2}} \\
&\qquad +\frac{630(\tau q-q^2)^2(\tau q-2q^2)((\tau- q)\nabla q + q \nabla \tau) + 420(\tau q-q^2)^3((\tau- 2q)\nabla q + q \nabla \tau)}{(1+q^2-2q\tau)^{9/2}}\\
&\qquad +\frac{945(\tau q-q^2)^4((\tau- q)\nabla q + q \nabla \tau)}{(1+q^2-2q\tau)^{11/2}}
\label{eq:nablaqdq4phi}
\end{align}
\begin{align}
&=\frac{\mathrm{ID16}}{\mathrm{denom3}}
+ \frac{3\mathrm{tqm8qsq*ID} + 12\mathrm{ID8*tqmqsq} + 12\mathrm{tqm4qsq*ID2} + 18\mathrm{tqm2qsq*ID4}}{\mathrm{denom5}} \\
&\qquad +\frac{60\mathrm{tqm4qsq*tqmqsq*ID} + 45\mathrm{tqm2qsq2*ID}}{\mathrm{denom7}} \\
&\qquad + \frac{180\mathrm{tqmqsq*ID2*tqm2qsq} + 90\mathrm{tqmqsq2*ID4}}{\mathrm{denom7}} \\
&\qquad +\frac{630\mathrm{tqmqsq2*tqm2qsq*ID} + 420\mathrm{tqmqsq3*ID2}}{\mathrm{denom9}}
+\frac{945\mathrm{tqmqsq4*ID}}{\mathrm{denom11}}\\\notag \\
&\nabla \left[ \left(q\intd_q\right)^5 \phi(q)\right]\\
&= \nabla\left( \frac{\tau q-16q^2}{(1+q^2-2q\tau)^{3/2}} \right.\\
&\qquad +\frac{15(\tau q-8q^2)(\tau q-q^2) + 12(\tau q-4q^2)(\tau q-2q^2) + 18(\tau q-2q^2)(\tau q-4q^2)}{(1+q^2-2q\tau)^{5/2}} \\
&\qquad + \frac{150(\tau q-4q^2)(\tau q-q^2)^2 + 225(\tau q-2q^2)^2(\tau q-q^2)}{(1+q^2-2q\tau)^{7/2}}\\
&\qquad + \frac{1050(\tau q-q^2)^3(\tau q-2q^2)}{(1+q^2-2q\tau)^{9/2}} 
+ \left. \frac{945(\tau q-q^2)^5}{(1+q^2-2q\tau)^{11/2}} \right)\\
&= \frac{q\nabla\tau + \tau\nabla q -32q\nabla q}{(1+q^2-2q\tau)^{3/2}} 
- \frac{3}{2}\frac{(\tau q-16q^2)(2q\nabla q - 2\tau\nabla q - 2q \nabla \tau)}{(1+q^2-2q\tau)^{5/2}}\\
&\qquad + \frac{15(q\nabla\tau + \tau\nabla q -16q\nabla q)(\tau q-q^2) + 15(\tau q-8q^2)(q\nabla\tau + \tau\nabla q-2q\nabla q)}{(1+q^2-2q\tau)^{5/2}} \\
&\qquad + \frac{12(q\nabla\tau + \tau\nabla q -8q\nabla q)(\tau q-2q^2)+12(\tau q-4q^2)(q\nabla\tau + \tau\nabla q -4q\nabla q)}{(1+q^2-2q\tau)^{5/2}} \\
&\qquad + \frac{18(q\nabla\tau + \tau\nabla q -4q\nabla q)(\tau q-4q^2)+18(\tau q-2q^2)(q\nabla\tau + \tau\nabla q -8q\nabla q)}{(1+q^2-2q\tau)^{5/2}} \\
&\qquad -\frac{5}{2}\frac{15(\tau q-8q^2)(\tau q-q^2)(2q\nabla q - 2\tau\nabla q - 2q \nabla \tau)}{(1+q^2-2q\tau)^{7/2}}\\
&\qquad -\frac{5}{2}\frac{12(\tau q-4q^2)(\tau q-2q^2)(2q\nabla q - 2\tau\nabla q - 2q \nabla \tau)}{(1+q^2-2q\tau)^{7/2}}\\
&\qquad -\frac{5}{2}\frac{18(\tau q-2q^2)(\tau q-4q^2)(2q\nabla q - 2\tau\nabla q - 2q \nabla \tau)}{(1+q^2-2q\tau)^{7/2}}\\
&\qquad +\frac{150(q\nabla\tau + \tau\nabla q -8q\nabla q)(\tau q-q^2)^2 + 300(\tau q-4q^2)(\tau q-q^2)(q\nabla\tau + \tau\nabla q -2q\nabla q) }{(1+q^2-2q\tau)^{7/2}}\\
&\qquad + \frac{450(\tau q-2q^2)(q\nabla\tau + \tau\nabla q -4q\nabla q)(\tau q-q^2) + 225(\tau q-2q^2)^2(q\nabla\tau + \tau\nabla q -2q\nabla q)}{(1+q^2-2q\tau)^{7/2}}\\
&\qquad -\frac{7}{2}\frac{150(\tau q-4q^2)(\tau q-q^2)^2(2q\nabla q - 2\tau\nabla q - 2q \nabla \tau)}{(1+q^2-2q\tau)^{9/2}}\\ 
&\qquad - \frac{7}{2}\frac{225(\tau q-2q^2)^2(\tau q-q^2)(2q\nabla q - 2\tau\nabla q - 2q \nabla \tau)}{(1+q^2-2q\tau)^{9/2}}\\
&\qquad +\frac{3150(\tau q-q^2)^2(q\nabla\tau + \tau\nabla q -2q\nabla q)(\tau q-2q^2) + 1050(\tau q-q^2)^3(q\nabla\tau + \tau\nabla q -4q\nabla q)}{(1+q^2-2q\tau)^{9/2}} \\
&\qquad -\frac{9}{2}\frac{1050(\tau q-q^2)^3(\tau q-2q^2)(2q\nabla q - 2\tau\nabla q - 2q \nabla \tau)}{(1+q^2-2q\tau)^{11/2}} \\
&\qquad + \frac{4725(\tau q-q^2)^4(q\nabla\tau + \tau\nabla q -2q\nabla q)}{(1+q^2-2q\tau)^{11/2}} 
-\frac{11}{2}\frac{945(\tau q-q^2)^5(2q\nabla q - 2\tau\nabla q - 2q \nabla \tau)}{(1+q^2-2q\tau)^{13/2}} 
\end{align}
\begin{align}
&= \frac{q\nabla\tau + \tau\nabla q -32q\nabla q}{(1+q^2-2q\tau)^{3/2}} 
+ \frac{3(\tau q-16q^2)((\tau - q)\nabla q + q \nabla \tau)}{(1+q^2-2q\tau)^{5/2}}\\
&\qquad + \frac{15((\tau - 16q)\nabla q + q \nabla \tau)(\tau q-q^2) + 15(\tau q-8q^2)((\tau - 2q)\nabla q + q \nabla \tau)}{(1+q^2-2q\tau)^{5/2}} \\
&\qquad + \frac{12((\tau - 8q)\nabla q + q \nabla \tau)(\tau q-2q^2)+12(\tau q-4q^2)((\tau - 4q)\nabla q + q \nabla \tau)}{(1+q^2-2q\tau)^{5/2}} \\
&\qquad + \frac{18((\tau - 4q)\nabla q + q \nabla \tau)(\tau q-4q^2)+18(\tau q-2q^2)((\tau - 8q)\nabla q + q \nabla \tau)}{(1+q^2-2q\tau)^{5/2}} \\
&\qquad + \frac{75(\tau q-8q^2)(\tau q-q^2)((\tau - q)\nabla q + q \nabla \tau)}{(1+q^2-2q\tau)^{7/2}}\\
&\qquad +\frac{60(\tau q-4q^2)(\tau q-2q^2)((\tau - q)\nabla q + q \nabla \tau)}{(1+q^2-2q\tau)^{7/2}}\\
&\qquad +\frac{90(\tau q-2q^2)(\tau q-4q^2)((\tau - q)\nabla q + q \nabla \tau)}{(1+q^2-2q\tau)^{7/2}}\\
&\qquad +\frac{150((\tau - 8q)\nabla q + q \nabla \tau)(\tau q-q^2)^2 + 300(\tau q-4q^2)(\tau q-q^2)((\tau - 2q)\nabla q + q \nabla \tau) }{(1+q^2-2q\tau)^{7/2}}\\
&\qquad + \frac{450(\tau q-2q^2)((\tau - 4q)\nabla q + q \nabla \tau)(\tau q-q^2) + 225(\tau q-2q^2)^2((\tau - 2q)\nabla q + q \nabla \tau)}{(1+q^2-2q\tau)^{7/2}}\\
&\qquad + \frac{1050(\tau q-4q^2)(\tau q-q^2)^2((\tau - q)\nabla q + q \nabla \tau)}{(1+q^2-2q\tau)^{9/2}}\\ 
&\qquad + \frac{1575(\tau q-2q^2)^2(\tau q-q^2)((\tau - q)\nabla q + q \nabla \tau)}{(1+q^2-2q\tau)^{9/2}}\\
&\qquad +\frac{3150(\tau q-q^2)^2((\tau - 2q)\nabla q + q \nabla \tau)(\tau q-2q^2) + 1050(\tau q-q^2)^3((\tau - 4q)\nabla q + q \nabla \tau)}{(1+q^2-2q\tau)^{9/2}} \\
&\qquad +\frac{9450(\tau q-q^2)^3(\tau q-2q^2)((\tau - q)\nabla q + q \nabla \tau)}{(1+q^2-2q\tau)^{11/2}} \\
&\qquad + \frac{4725(\tau q-q^2)^4((\tau - 2q)\nabla q + q \nabla \tau)}{(1+q^2-2q\tau)^{11/2}} 
+\frac{10395(\tau q-q^2)^5((\tau - q)\nabla q + q \nabla \tau)}{(1+q^2-2q\tau)^{13/2}} 
\end{align}
\begin{align}
&= \frac{(\tau-32q)\nabla q + q\nabla \tau}{(1+q^2-2q\tau)^{3/2}} \\
&\qquad + \frac{3(\tau q-16q^2)((\tau - q)\nabla q + q \nabla \tau)+ 15((\tau - 16q)\nabla q + q \nabla \tau)(\tau q-q^2)}{(1+q^2-2q\tau)^{5/2}} \\
&\qquad + \frac{15(\tau q-8q^2)((\tau - 2q)\nabla q + q \nabla \tau) + 30((\tau - 8q)\nabla q + q \nabla \tau)(\tau q-2q^2)}{(1+q^2-2q\tau)^{5/2}} \\
&\qquad + \frac{30(\tau q-4q^2)((\tau - 4q)\nabla q + q \nabla \tau)}{(1+q^2-2q\tau)^{5/2}} \\
&\qquad + \frac{75(\tau q-8q^2)(\tau q-q^2)((\tau - q)\nabla q + q \nabla \tau) + 150(\tau q-4q^2)(\tau q-2q^2)((\tau - q)\nabla q + q \nabla \tau)}{(1+q^2-2q\tau)^{7/2}}\\
&\qquad + \frac{150((\tau - 8q)\nabla q + q \nabla \tau)(\tau q-q^2)^2 + 300(\tau q-4q^2)(\tau q-q^2)((\tau - 2q)\nabla q + q \nabla \tau)}{(1+q^2-2q\tau)^{7/2}}\\
&\qquad + \frac{450(\tau q-2q^2)((\tau - 4q)\nabla q + q \nabla \tau)(\tau q-q^2) + 225(\tau q-2q^2)^2((\tau - 2q)\nabla q + q \nabla \tau)}{(1+q^2-2q\tau)^{7/2}}\\
&\qquad + \frac{1050(\tau q-4q^2)(\tau q-q^2)^2((\tau - q)\nabla q + q \nabla \tau) + 1575(\tau q-2q^2)^2(\tau q-q^2)((\tau - q)\nabla q + q \nabla \tau)}{(1+q^2-2q\tau)^{9/2}}\\
&\qquad + \frac{3150(\tau q-q^2)^2((\tau - 2q)\nabla q + q \nabla \tau)(\tau q-2q^2) + 1050(\tau q-q^2)^3((\tau - 4q)\nabla q + q \nabla \tau)}{(1+q^2-2q\tau)^{9/2}} \\
&\qquad +\frac{9450(\tau q-q^2)^3(\tau q-2q^2)((\tau - q)\nabla q + q \nabla \tau) + 4725(\tau q-q^2)^4((\tau - 2q)\nabla q + q \nabla \tau)}{(1+q^2-2q\tau)^{11/2}} \\
&\qquad + \frac{10395(\tau q-q^2)^5((\tau - q)\nabla q + q \nabla \tau)}{(1+q^2-2q\tau)^{13/2}} 
\label{eq:nablaqdq5phi}\\ \notag\\
&\eqqcolon \frac{\mathrm{ID32}}{\mathrm{denom3}} \\
&\qquad + \frac{3\mathrm{tqm16qsq*ID} + 15\mathrm{ID16*tqmqsq}}{\mathrm{denom5}} \\
&\qquad + \frac{15\mathrm{tqm8qsq*ID2} + 30\mathrm{ID8*tqm2qsq}}{\mathrm{denom5}} \\
&\qquad + \frac{30\mathrm{tqm4qsq*ID4}}{\mathrm{denom5}} \\
&\qquad + \frac{75\mathrm{tqm8qsq*tqmqsq*ID} + 150\mathrm{tqm4qsq*tqm2qsq*ID}}{\mathrm{denom7}}\\
&\qquad + \frac{150\mathrm{ID8*tqmqsq2} + 300\mathrm{tqm4qsq*tqmqsq*ID2}}{\mathrm{denom7}}\\
&\qquad + \frac{450\mathrm{tqm2qsq*ID4*tqmqsq} + 225\mathrm{tqm2qsq2*ID2}}{\mathrm{denom7}}\\
&\qquad + \frac{1050\mathrm{tqm4qsq*tqmqsq2*ID} + 1575\mathrm{tqm2qsq2*tqmqsq*ID}}{\mathrm{denom9}}\\
&\qquad + \frac{3150\mathrm{tqmqsq2*ID2*tqm2qsq} + 1050\mathrm{tqmqsq3*ID4}}{\mathrm{denom9}} \\
&\qquad +\frac{9450\mathrm{tqmqsq3*tqm2qsq*ID} + 4725\mathrm{tqmqsq4*ID2}}{\mathrm{denom11}} \\
&\qquad + \frac{10395\mathrm{tqmqsq5*ID}}{\mathrm{denom13}} 
\end{align}

\end{document}